\date{12 June 2017}%%%%%%%%%%%%
\newtheorem{theorem}{Theorem}[section]
\newtheorem{lemma}[theorem]{Lemma}
\newtheorem{remark}[theorem]{Remark}
\newtheorem{definition}[theorem]{Definition}
\begin{document}
\title{The geometry of locally symmetric affine surfaces}
\author{D. D'Ascanio, P. Gilkey, and P. Pisani}
\address{D'Ascanio: Instituto de F\'isica La Plata, CONICET and Departamento de F\'isica, Fa\-cul\-tad de Ciencias Exactas,
	Universidad Nacional de La Plata, CC 67 (1900) La Plata, Argentina.}
\email{dascanio@fisica.unlp.edu.ar}
\address{Gilkey: Mathematics Department, University of Oregon, Eugene OR 97403 USA.}
\email{gilkey@uoregon.edu}
\address{Pisani: Instituto de F\'isica La Plata, CONICET and Departamento de F\'isica, Fa\-cul\-tad de Ciencias Exactas,
	Universidad Nacional de La Plata, CC 67 (1900) La Plata, Argentina.}
\email{pisani@fisica.unlp.edu.ar}
\keywords{Ricci tensor, symmetric affine surface, geodesic completeness}
\subjclass[2010]{53C21}
\begin{abstract} {We examine the local geometry of affine surfaces which are locally symmetric. There are 6 non-isomorphic
local geometries. We realize these examples as Type~$\mathcal{A}$, Type~$\mathcal{B}$, and Type~${\mathcal{C}}$ geometries using a result of
Opozda and classify the relevant geometries up to linear isomorphism. We examine the geodesic structures in this context. Particular attention is paid to
the Lorentzian analogue of the hyperbolic plane and to the pseudosphere.}
\end{abstract}
\maketitle
\section*{This is dedicated to the memory of our colleague and friend Eberhard Zeidler}
\section{Introduction}
We introduce the following notational conventions:
\begin{definition}\rm
An {\it affine manifold} $\mathcal{M}:=(M,\nabla)$ is a pair where $M$
is a connected $m$-dimensional manifold and $\nabla$ is a torsion free connection on the tangent bundle of $M$.
An {\it affine morphism} between two affine manifolds $\mathcal{M}$ and $\tilde{\mathcal{M}}$ is a diffeomorphism
$\Theta$ from $M$ to $\tilde M$ which intertwines the two connections $\nabla$ and $\tilde\nabla$;
$\mathcal{M}$ is said to be {\it locally homogeneous} if given any two points $P$ and $\tilde P$ of $M$, there is the
germ of an affine morphism from a neighborhood of $P$ to a neighborhood of $\tilde P$.
\end{definition}

\begin{definition}\rm
Let $R(x,y):=\nabla_x\nabla_y-\nabla_y\nabla_x-\nabla_{[x,y]}$ be the curvature operator.
If $\nabla R=0$, then $\mathcal{M}$ is said to be {\it locally symmetric}. Let
$\rho(x,y):=\operatorname{Tr}\{z\rightarrow R(z,x)y\}$ be the Ricci tensor. Although
$\rho$ is symmetric in the Riemannian setting, this need no longer be the case in the affine setting.
Consequently, we introduce the {\it symmetric Ricci tensor}
$\rho_s(x,y):=\frac12\{\rho(x,y)+\rho(y,x)\}$.
\end{definition}

\begin{theorem}\label{T1.1}
 Let $\mathcal{M}$ be a connected locally symmetric affine manifold.
\begin{enumerate}
\item $\mathcal{M}$ is locally affine homogeneous.
\item If $\rho_s$ has maximal rank, then $\nabla$ is the Levi--Civita connection of the locally symmetric
pseudo-Riemannian manifold $(M,\rho_s)$.
\end{enumerate}
\end{theorem}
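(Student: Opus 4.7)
The plan is to handle the two parts in opposite order. For \textbf{(2)}, contracting the hypothesis $\nabla R=0$ in the definition of $\rho$ yields $\nabla\rho=0$ (trace and covariant derivative commute), and symmetrising gives $\nabla\rho_s=0$. When $\rho_s$ has maximal rank it is a pseudo-Riemannian metric on $M$, and a torsion-free connection preserving a non-degenerate symmetric $(0,2)$-tensor is uniquely its Levi--Civita connection by the Koszul formula. Thus $\nabla=\nabla^{\rho_s}$, and $\nabla R=0$ then reads $\nabla^{\rho_s}R=0$, so $(M,\rho_s)$ is a locally symmetric pseudo-Riemannian manifold.

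For \textbf{(1)}, I would follow the affine analogue of Cartan's argument: at each $P\in M$ exhibit the local geodesic symmetry $\sigma_P\bigl(\exp_P(v)\bigr):=\exp_P(-v)$, defined on a normal neighborhood $U_P$, as an affine morphism. In $\nabla$-normal coordinates centered at $P$ the map is $x\mapsto -x$, and a direct computation of pulled-back Christoffel symbols reduces the condition $\sigma_P^{*}\nabla=\nabla$ to the parity requirement $\Gamma_{ij}^k(-x)=-\Gamma_{ij}^k(x)$, i.e.\ $\Gamma$ must be an odd function of $x$. The classical theory of normal coordinates expresses each Taylor coefficient of $\Gamma_{ij}^k$ at the origin as a universal polynomial in the components of $R,\nabla R,\nabla^2R,\ldots$ at $P$; under $\nabla R=0$, only polynomials in $R_P$ alone survive. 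An index count then settles the parity: each factor of $R$ has four indices and $\Gamma_{ij}^k$ has three free indices, so a monomial built from $j$ factors of $R_P$ with $c$ internal contractions is paired with exactly $n=4j-2c-3$ coordinate variables $x^l$, which is always odd. Hence $\Gamma$ is odd and $\sigma_P$ is an affine morphism.

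Local homogeneity then follows by the usual midpoint argument: for $P,\tilde P$ sufficiently close, if $Q$ is the midpoint of the short geodesic joining them, then $\sigma_Q(P)=\tilde P$ and $\sigma_Q$ supplies the required germ; for arbitrary $P,\tilde P\in M$ the connectedness of $M$ allows one to join them by a compact path, cover it by finitely many normal neighborhoods, and compose the corresponding germs. The principal obstacle I expect is the classical expansion underlying the parity count, namely the rigorous verification that every Taylor coefficient of $\Gamma_{ij}^k$ in normal coordinates is built polynomially from $R$ and its iterated covariant derivatives at $P$; this is most cleanly carried out by induction on order, using the identity $\Gamma_{ij}^k(x)\,x^ix^j=0$ that characterises normal coordinates to relate $\partial^p\Gamma(0)$ to $\nabla^{p-1}R(P)$.
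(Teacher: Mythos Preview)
Your treatment of (2) matches the paper's: once $\nabla R=0$ yields $\nabla\rho_s=0$, torsion-freeness forces $\nabla$ to be the Levi--Civita connection of $\rho_s$, and the pseudo-Riemannian manifold $(M,\rho_s)$ is then locally symmetric.

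For (1) the paper takes a much shorter route: it simply invokes Nomizu's theorem (Theorem~17.1 of \cite{N54}) that the geodesic symmetry $\sigma_P$ is an affine morphism whenever $\nabla R=0$, and then composes such symmetries along a chain to reach any other point. Your normal-coordinate parity argument is a genuinely different idea, and the index count is sound (in fact one gets exactly $n=2j-1$ when $j$ copies of $R_P$ are used, since matching one upper and $n+2$ lower indices forces $c=j-1$ contractions). However there is a real gap beyond the one you flag. Establishing that every Taylor coefficient of $\Gamma_{ij}{}^k$ at the origin arises from an odd number of $x$-factors shows only that the \emph{formal} Taylor series of $\Gamma(x)+\Gamma(-x)$ vanishes; for a merely $C^\infty$ connection this does not imply $\Gamma(x)=-\Gamma(-x)$ on the normal neighborhood. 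You would additionally need that $\Gamma$ is real-analytic in normal coordinates. This is in fact true for a locally symmetric connection (for instance because Jacobi fields along radial geodesics satisfy a linear ODE with constant coefficients in a parallel frame, so $\exp_P$ and hence the pulled-back connection are analytic), but it is a nontrivial extra step that your proposal neither states nor proves. Nomizu's argument, which the paper cites, avoids power series entirely by using parallel transport and the uniqueness of affine maps with prescribed $1$-jet.
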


\begin{proof} We establish Assertion~(1) as follows.
There exists an open neighborhood $\mathcal{O}$ of $0$ in $T_PM$ so that the exponential map $\exp_P$ is
a diffeomorphism from $\mathcal{O}$ to an open neighborhood $\tilde{\mathcal{O}}$ of $P$ in $M$. We may assume
that $-\mathcal{O}=\mathcal{O}$ without loss of generality and define the {\it geodesic symmetry}
$\sigma_P(Q):=\exp_P(-\exp_P^{-1}(Q))$ for $Q\in\tilde{\mathcal{O}}$. Work of Nomizu~\cite{N54} (see
Theorem 17.1) shows that $\sigma_P$ is an affine morphism. One can compose geodesic symmetries
around various points to show that $\mathcal{M}$ is locally homogeneous. We refer to Koh~\cite{K65} for
subsequent related work. We also note that if $\mathcal{M}$ is locally symmetric, then $\mathcal{M}$ is $k$-affine
curvature homogeneous for all $k$ and this result follows from the work of Pecastaing~\cite{P16} on the
``Singer number" in a quite general context. Finally, it follows from work of Opozda~\cite{O97} in the analytic setting.

 If $\rho_s$ has maximal rank, then $(M,\rho_s)$ is a pseudo-Riemannian manifold. Since $\nabla\rho_s=0$ and $\nabla$ is torsion free,
$\nabla$ is the Levi-Civita connection of $\rho_s$ and $(M,g)$ is a locally symmetric pseudo-Riemannian manifold.
\end{proof}

We shall examine the geometry of locally symmetric affine surfaces
using the following result of Opozda~\cite{Op04}:

\begin{theorem}
Let $\mathcal{M}$ be a locally homogeneous affine surface. Then at least one of the following
three possibilities, which are not exclusive, hold which describe the local geometry:
\begin{itemize}
\item[($\mathcal{A}$)] There exists a coordinate atlas so the Christoffel symbols
$\Gamma_{ij}{}^k$ are constant.
\item[($\mathcal{B}$)] There exists a coordinate atlas so the Christoffel symbols have the form
$\Gamma_{ij}{}^k=(x^1)^{-1}C_{ij}{}^k$ for $C_{ij}{}^k$ constant and $x^1>0$.
\item[($\mathcal{C}$)] $\nabla$ is the Levi-Civita connection of a metric of constant Gauss
curvature.
\end{itemize}\end{theorem}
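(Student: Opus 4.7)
The plan is to analyze the Lie algebra $\mathfrak{g}$ of germs of local affine Killing vector fields (infinitesimal generators of affine morphisms) near a fixed point $P \in M$. By local homogeneity, the evaluation map $\mathrm{ev}_P \colon \mathfrak{g} \to T_P M$ is surjective, so $\dim \mathfrak{g} \geq 2$. The three types will correspond to which kind of $2$-dimensional subalgebras of $\mathfrak{g}$ act locally transitively near $P$.

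First, I would treat the case in which there is a $2$-dimensional abelian subalgebra $\mathfrak{h} \subset \mathfrak{g}$ with $\mathrm{ev}_P(\mathfrak{h}) = T_P M$. Choose commuting generators $X_1,X_2$; the simultaneous flow-box theorem furnishes coordinates with $X_i=\partial_i$. The Killing equation $\mathcal{L}_{X_i}\nabla=0$ then reads $\partial_i\Gamma_{jk}{}^l=0$, giving constant Christoffel symbols: Type~$\mathcal{A}$.

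Next, suppose every transitive $2$-dimensional subalgebra of $\mathfrak{g}$ is non-abelian. The unique $2$-dimensional non-abelian Lie algebra has generators satisfying $[X_1,X_2]=-X_2$ and, up to a change of coordinates, can be realized on the half-plane $\{x^1>0\}$ by the Euler dilation $X_1=x^1\partial_1+x^2\partial_2$ together with the translation $X_2=\partial_2$. Invariance of $\Gamma$ under $X_2$ gives $\partial_2\Gamma_{ij}{}^k=0$; invariance under $X_1$, combined with the transformation rule for Christoffel symbols under $x\mapsto\lambda x$ (which rescales $\Gamma$ by $\lambda^{-1}$), forces $\Gamma$ to be homogeneous of degree $-1$ in $x^1$. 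The two conditions together yield $\Gamma_{ij}{}^k=(x^1)^{-1}C_{ij}{}^k$: Type~$\mathcal{B}$.

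The hard part, and the technical heart of Opozda's argument, is to show that if $\mathfrak{g}$ contains no transitive $2$-dimensional subalgebra, then $\nabla$ is the Levi-Civita connection of a metric of constant Gauss curvature (Type~$\mathcal{C}$). In that situation $\dim\mathfrak{g}\geq 3$ and the isotropy algebra $\mathfrak{g}_P=\ker\mathrm{ev}_P$ is non-trivial; one would embed $\mathfrak{g}_P$ into $\mathfrak{gl}(T_PM)\cong\mathfrak{gl}(2,\mathbb{R})$ via its linearized action on the tangent space. A case analysis on the possible subalgebras of $\mathfrak{gl}(2,\mathbb{R})$ that can arise (under the hypothesis that no $2$-dimensional subalgebra of $\mathfrak{g}$ acts transitively) shows that $\mathfrak{g}_P$ acts irreducibly, so any invariant symmetric bilinear form on $T_PM$ is a scalar multiple of a fixed non-degenerate form $g_P$. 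Transporting $g_P$ by the action of $\mathfrak{g}$ produces a Killing-invariant pseudo-Riemannian metric $g$ on a neighborhood of $P$; since $\nabla$ is invariant under the same algebra and is torsion-free, one then deduces $\nabla g=0$, so $\nabla$ is the Levi-Civita connection of $g$. Finally, transitivity of the Killing algebra in dimension $2$ forces the Gauss curvature of $g$ to be constant.
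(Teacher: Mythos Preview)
The paper does not prove this theorem at all: it is quoted as a result of Opozda~\cite{Op04} and used as input for the rest of the paper. So there is no ``paper's own proof'' to compare against; your write-up is an attempt to reproduce Opozda's argument, not something the present paper undertakes.

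As a sketch of Opozda's proof your outline is broadly on target for Types~$\mathcal{A}$ and~$\mathcal{B}$, but the Type~$\mathcal{C}$ case has a genuine gap. You assert that when no $2$-dimensional subalgebra of $\mathfrak{g}$ is transitive, a case analysis of subalgebras of $\mathfrak{gl}(2,\mathbb{R})$ forces the isotropy $\mathfrak{g}_P$ to act irreducibly on $T_PM$. That is not automatic: $\mathfrak{g}_P$ could a priori contain a nonzero nilpotent (hence reducible) element, or be contained in the diagonal subalgebra. Ruling those possibilities out is exactly where the work lies, and it uses the hypothesis that no $2$-plane in $\mathfrak{g}$ is transitive in a nontrivial way; you have asserted the conclusion rather than supplied the mechanism. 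Likewise, ``transporting $g_P$ by the action of $\mathfrak{g}$'' to produce a metric presupposes that the isotropy preserves $g_P$, which is precisely what irreducibility is meant to guarantee, so the argument as written is close to circular at that step. If you want a self-contained proof you will need to carry out the subalgebra classification explicitly (this is the content of Opozda's paper) rather than invoke it.
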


We say that $\mathcal{M}$ is Type~$\mathcal{A}$, Type~$\mathcal{B}$, or Type~$\mathcal{C}$ depending on
which of the possibilities hold. The Ricci tensor
carries the geometry in the 2-dimensional setting; $\mathcal{M}$ is flat if and only if $\rho=0$ and $\mathcal{M}$ is locally
symmetric if and only if $\nabla\rho=0$.

\begin{theorem}
Let $\mathcal{M}=(M,\nabla)$ be a locally symmetric affine surface. If we cover $\mathcal{M}$ by a Type~$\mathcal{A}$ (resp. Type~$\mathcal{B}$
or Type~$\mathcal{C}$) coordinate atlas, then $\mathcal{M}$ is real analytic.
\end{theorem}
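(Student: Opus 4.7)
The plan is to show in each case that $M$ can be covered by charts in which the Christoffel symbols are real analytic; real analyticity of the manifold then follows from the analyticity of the exponential map via the classical theorem on analytic dependence of ODE solutions on initial data.

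First I would verify the existence, around every point of $M$, of a chart with real analytic Christoffel symbols. In Type~$\mathcal{A}$ coordinates, $\Gamma_{ij}{}^k$ is constant and hence trivially analytic. In Type~$\mathcal{B}$ coordinates, $(x^1)^{-1}C_{ij}{}^k$ is manifestly analytic on the half-plane $x^1>0$. In the Type~$\mathcal{C}$ case, $\nabla$ is the Levi--Civita connection of a metric $g$ of constant Gauss curvature; the local classification of $2$-dimensional constant-curvature geometries (Killing--Hopf in the Riemannian setting, together with its well-known analogue in indefinite signature covering Minkowski, de~Sitter, and anti-de~Sitter models) shows that every point has a neighborhood isometric to one of the standard simply connected analytic model surfaces. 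Pulling back their explicit analytic coordinates produces a chart on $M$ in which the Christoffel symbols are real analytic.

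Given real analytic Christoffel symbols in a chart around each point, the geodesic equation $\ddot x^k+\Gamma_{ij}{}^k(x)\,\dot x^i\dot x^j=0$, rewritten as a first-order system on $TM$, is an analytic ODE. By the classical theorem on analytic dependence of solutions of analytic ODEs on initial conditions, the exponential map $\exp_P\colon T_PM\supset\mathcal{O}\to M$ is real analytic on a neighborhood of $0$. The exponential charts $\{\exp_P^{-1}\}_{P\in M}$ therefore cover $M$, and the transition maps $\exp_P^{-1}\circ\exp_Q$ are compositions of real analytic maps, hence real analytic. This endows $M$ with a real analytic atlas compatible with its smooth structure, proving the theorem.

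The main obstacle is Type~$\mathcal{C}$: in the other two types the explicit form of $\Gamma_{ij}{}^k$ makes analyticity in the given coordinates automatic, whereas in Type~$\mathcal{C}$ the given chart is arbitrary and the metric coefficients need not be analytic in it. One circumvents this by invoking the local model classification of constant-curvature surfaces mentioned above; alternatively, one can pass to isothermal coordinates and use the fact that a smooth solution of the Liouville equation $\Delta\varphi=-\kappa e^{2\varphi}$ is automatically real analytic, since the operator is real analytic and elliptic.
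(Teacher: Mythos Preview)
Your approach is correct in spirit and genuinely different from the paper's. The paper does not use the analyticity of the Christoffel symbols together with ODE theory; instead it argues that the transition functions of the given atlas are affine morphisms of the model geometry and then shows, case by case, that such morphisms are real analytic: for Type~$\mathcal{A}$ it quotes that the affine Killing vector fields are real analytic, and for Types~$\mathcal{B}$ and~$\mathcal{C}$ it invokes the later classification (Theorem~\ref{T3.1}) and Theorem~\ref{T1.1} to reduce to isometries of $\mathbb{H}^2$, $\mathbb{L}^2$, $S^2$, or flat space, whose isometry groups are explicitly real analytic. Your argument is more elementary and self-contained, does not need the forward references or the structure of the automorphism groups, and in fact never uses the locally symmetric hypothesis for Types~$\mathcal{A}$ and~$\mathcal{B}$; it shows that \emph{any} surface carrying such an atlas is real analytic. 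The paper's route, in exchange, yields the explicit description of the transition maps.

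There is one slip in your write-up. Building the auxiliary exponential atlas and asserting that $\exp_P^{-1}\circ\exp_Q$ is ``a composition of real analytic maps'' is circular: to say $\exp_Q$ is real analytic near a point $R$ you must express it in some chart containing $R$, and if that chart is not the one containing $Q$ you are implicitly using analyticity of a transition map you have not yet established. The fix is to bypass the exponential atlas entirely and argue directly on the given charts: for $P$ in the overlap of two Type~$\mathcal{A}$ (or $\mathcal{B}$) charts $\phi_\alpha,\phi_\beta$, both $\phi_\alpha\circ\exp_P$ and $\phi_\beta\circ\exp_P$ are real analytic near $0\in T_PM$ by the analytic ODE theorem applied in each chart separately, and hence $\phi_\beta\circ\phi_\alpha^{-1}=(\phi_\beta\circ\exp_P)\circ(\phi_\alpha\circ\exp_P)^{-1}$ is real analytic near $\phi_\alpha(P)$. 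This is the argument your outline is reaching for, and with this adjustment the proof is complete.
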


\begin{proof} Suppose $\mathcal{M}$ is an affine surface which is locally homogeneous and which is modeled on a
Type-$\mathcal{A}$ geometry $\tilde{\mathcal{M}}=(\mathbb{R}^2,\tilde\nabla)$.
The transition functions for the coordinate atlas are diffeomorphisms from some open subset of $\mathbb{R}^2$
to another subset of $\mathbb{R}^2$ preserving $\tilde\nabla$.
Let $\tilde{\mathfrak{A}}$ be the Lie-algebra of affine Killing vector fields for $\tilde\nabla$. The analysis of
\cite{BGGP16} shows that the elements of $\tilde{\mathfrak{A}}$ are real analytic. Since the coordinate
vector fields are Killing vector fields, their image under the transition functions is again real analytic and thus
the coordinate atlas is real analytic.

 Let $\mathbb{H}^2$ be the Riemannian $(+)$ and $\mathbb{L}^2$ be the Lorentzian $(-)$ hyperbolic upper half plane defined by the metrics
$$
ds^2=\frac{(dx^1)^2\pm(dx^2)^2}{(x^1)^2}\,.
$$
These are Type~$\mathcal{B}$ geometries.
We will show presently in Theorem~\ref{T3.1} that any Type~$\mathcal{B}$ model which is locally symmetric is either of
Type~$\mathcal{A}$ (which has been dealt with above) or is linearly isomorphic to either $\mathbb{H}^2$ or $\mathbb{L}^2$.
 By Theorem~\ref{T1.1}, the germ of an affine morphism $\Phi$ of one of these two
geometries is in fact an isometry of the underlying metric. The
orientation preserving isometries of these geometries are described in Theorem~\ref{T4.1}; they are linear
fractional transformations. The map $(x^1,x^2)\rightarrow(x^1,-x^2)$ provides an orientation
reversing isometry. Thus the affine morphisms of $\mathbb{H}^2$ and $\mathbb{L}^2$ are real analytic and the
coordinate atlas is real analytic in this framework.

The only Type~$\mathcal{C}$ models are flat space, the sphere, $\mathbb{H}^2$ and $\mathbb{L}^2$. The affine
morphisms of flat space are the affine maps; these are real analytic. The affine morphisms of $S^2$ are provided by
$\operatorname{O}(3)$ and are real analytic. We have already dealt with $\mathbb{H}^2$ and $\mathbb{L}^2$.
\end{proof}

In what follows, we will discuss the 3 cases separately. There are exactly 6 distinct affine
classes of locally symmetric affine surface models. However the distinction between affine-equivalence and linear
equivalence is crucial as it has great significance for geodesic completeness and the question of linear equivalence
is therefore more subtle. In Section~\ref{S2}, we summarize previous results concerning local affine symmetric spaces
in the Type~$\mathcal{A}$ setting. Up to linear equivalence, there are 3 geometries.
Section~\ref{S3} is the heart of the paper and presents new material concerning
local affine symmetric spaces in the Type~$\mathcal{B}$ setting. There are two families of geometries which are locally
affine equivalent to a Type~$\mathcal{A}$ geometry. In addition, there is the hyperbolic plane $\mathbb{H}^2$ and
the Lorentzian hyperbolic plane $\mathbb{L}^2$.

The Type~$\mathcal{C}$ symmetric geometries are modeled
on flat space, on $S^2$, on $\mathbb{H}^2$ and on $\mathbb{L}^2$ so these geometries
offer nothing essentially new not discussed previously.

The geometry $\mathbb{L}^2$ has many interesting features and we provide
a rather detailed analysis of this geometry in Sections~\ref{S4}--\ref{S5}. We discuss the pseudosphere $\mathbb{S}^2$, and the associated universal
cover $\tilde{\mathbb{S}}^2$ in Section~\ref{S6} as this provides another model of this geometry.
%In Section~\ref{S7}, we express the local isometries of $\mathbb{H}^2$ and of $\mathbb{L}^2$
%in terms of linear fractional transformations over the complex numbers and the para-complex numbers. We use this discussion to provide local
%isometries between $\mathbb{L}^2$ and $\mathbb{S}^2$ to show rather explicitly these two geometries are locally isometric. 
In Section~\ref{S8},
we use geodesic sprays of null geodesics to construct a global isometry between $\mathbb{L}^2$ (which is geodesically incomplete) and an open
subset of $\mathbb{S}^2$ (which is geodesically complete).

\section{Type~$\mathcal{A}$ local affine symmetric spaces}\label{S2}
Let $\mathcal{M}=(\mathbb{R}^2,\nabla)$ where
the Christoffel symbols $\Gamma_{ij}{}^k$ of $\nabla$ are constant. The translation subgroup  $\mathbb{R}^2$ of $\operatorname{GL}(2,\mathbb{R})$
$$
(x^1,x^2)\rightarrow(x^1+b^1,x^2+b^2)
$$
acts transitively on $\mathcal{M}$ so this is a homogeneous geometry. We regard $\Gamma$ as an element of the
6-dimensional vector space $S^2(\mathbb{R}^2)\otimes\mathbb{R}^2$. The general linear group
$\operatorname{GL}(2,\mathbb{R})$ acts on these geometries by the action
$$
(x^1,x^2)\rightarrow(a_{11}x^1+a_{12}x^2,a_{21}x^1+a_{22}x^2)\text{ for }a=(a_{ij})\in\operatorname{GL}(2,\mathbb{R})\,.
$$
We say that two Type~$\mathcal{A}$ models $\mathcal{M}$ and $\tilde{\mathcal{M}}$ are
{\it linearly equivalent} if there exists
$A\in\operatorname{GL}(2,\mathbb{R})$ so that $A:\mathbb{R}^2\rightarrow\mathbb{R}^2$ is an affine morphism
from $\mathcal{M}$ to $\tilde{\mathcal{M}}$.

\begin{definition}\label{D2.1}\rm Let $\mathcal{S}_1$, $\mathcal{S}_2$, $\mathcal{S}_3$, and $\tilde{\mathcal{S}}_3$ be the locally symmetric
affine structures on $\mathbb{R}^2$ obtained by taking non-zero Christoffel symbols:
$$\begin{array}{ll}
\mathcal{S}_1:=\{C_{11}{}^1=-1,\ C_{12}{}^1=-\frac12\},&\mathcal{S}_2:=\{C_{12}{}^1=-\frac12\},\\
\mathcal{S}_3:=\{C_{11}{}^1=-1\ C_{22}{}^1=-1\},&\tilde{\mathcal{S}}_3:=\{C_{22}{}^1=x^1\}.
\end{array}$$
$\mathcal{S}_1$, $\mathcal{S}_2$, and $\mathcal{S}_3$ are Type~$\mathcal{A}$
structures, $\tilde{\mathcal{S}}_3$ is not.
\end{definition}

The following result follows from work of \cite{BGGP16,DAGP17}.

\begin{theorem}\label{T2.2}
\ \begin{enumerate}
\item Any locally symmetric Type~$\mathcal{A}$ model is linearly isomorphic to $\mathcal{S}_1$, $\mathcal{S}_2$, or $\mathcal{S}_3$.
\item $\mathcal{S}_i$ is not linearly isomorphic to $\mathcal{S}_j$ for $i\ne j$.
\item $\mathcal{S}_1$ and $\mathcal{S}_2$ are locally affine isomorphic.
\item $\mathcal{S}_3$ is not locally affine isomorphic to either $\mathcal{S}_1$ or $\mathcal{S}_2$.
\item $\mathcal{S}_2$ is geodesically complete and the exponential map is a diffeomorphism.
\item $\tilde{\mathcal{S}}_3$ is geodesically complete and the exponential map is not 1-1.
\item $\mathcal{S}_1$ is geodesically incomplete. The map $(x^1,x^2)\rightarrow(e^{-x^1},x^2)$ is an affine embedding
of $\mathcal{S}_1$ into $\mathcal{S}_2$ so $\mathcal{S}_1$ can be geodesically completed.
\item $\mathcal{S}_3$ is geodesically incomplete. The map $(x^1,x^2)\rightarrow(e^{-x^1},x^2)$ is an affine embedding of
$\mathcal{S}_3$ into $\tilde{\mathcal{S}}_3$ so $\mathcal{S}_3$ can be geodesically completed.
\end{enumerate}
\end{theorem}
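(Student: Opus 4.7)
The plan is to treat the eight assertions in three groups: classification (1)--(2), local affine equivalence (3)--(4), and geodesic analysis (5)--(8). For (1)--(2), I start from the six-dimensional vector space $S^2(\mathbb{R}^2)^*\otimes\mathbb{R}^2$ of constant Christoffel tensors; imposing the symmetry condition $\nabla R=0$ cuts out a polynomial subvariety on which $\operatorname{GL}(2,\mathbb{R})$ acts, and the orbit analysis of \cite{BGGP16,DAGP17} identifies the three orbits with representatives $\mathcal{S}_1,\mathcal{S}_2,\mathcal{S}_3$. To check that these three models are pairwise inequivalent under this linear action, I compute two pointwise invariants at the origin: the Ricci tensor $\rho$, which turns out to be rank one in every case but is negative semidefinite for $\mathcal{S}_1,\mathcal{S}_2$ and positive semidefinite for $\mathcal{S}_3$; and the divergence one-form $\tau_k:=\Gamma_{ik}{}^i$, whose restriction to $\ker\rho$ vanishes for $\mathcal{S}_2$ but not for $\mathcal{S}_1$.

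For (3), I exhibit the diffeomorphism $\Phi(x^1,x^2)=(e^{-x^1},x^2)$ from $\mathbb{R}^2$ onto the half-plane $\{y^1>0\}$ and verify, using the standard change-of-coordinate rule
\[
\Gamma_{ij}{}^k=\frac{\partial y^a}{\partial x^i}\frac{\partial y^b}{\partial x^j}\frac{\partial x^k}{\partial y^c}\tilde\Gamma_{ab}{}^c+\frac{\partial^2 y^a}{\partial x^i\partial x^j}\frac{\partial x^k}{\partial y^a}
\]
(with $\tilde\Gamma$ denoting the Christoffel symbols of $\nabla^{\mathcal{S}_2}$ in the $y$-coordinates), that $\Phi^*\nabla^{\mathcal{S}_2}=\nabla^{\mathcal{S}_1}$; the second-derivative inhomogeneous term produces exactly the extra $\Gamma_{11}{}^1=-1$ of $\mathcal{S}_1$. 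For (4), I invoke the naturality of $\rho$ under affine morphisms: its pointwise signature is preserved, and the sign computation from the proof of (2) therefore rules out any local affine isomorphism between $\mathcal{S}_3$ and either $\mathcal{S}_1$ or $\mathcal{S}_2$.

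For the geodesic statements (5)--(8), since $\Gamma_{ij}{}^2\equiv 0$ in all four models, the system $\ddot x^k+\Gamma_{ij}{}^k\dot x^i\dot x^j=0$ always reduces to $\ddot x^2=0$ (hence $x^2(t)=at+b$) together with a scalar ODE for $u:=\dot x^1$. These ODEs are linear ($\dot u=au$) for $\mathcal{S}_2$, of harmonic-oscillator type ($\ddot x^1=-a^2 x^1$) for $\tilde{\mathcal{S}}_3$, and of Riccati type ($\dot u=u^2+au$ for $\mathcal{S}_1$, $\dot u=u^2+a^2$ for $\mathcal{S}_3$). Integrating each and examining the resulting exponential map settles (5) and (6): $\exp_0(p,q)=((p/q)(e^q-1),q)$ is visibly a global diffeomorphism for $\mathcal{S}_2$; and $\exp_0(p,q)=((p/q)\sin q,q)$ for $\tilde{\mathcal{S}}_3$ satisfies $\exp_0(p,k\pi)=(0,k\pi)$ for every $p\in\mathbb{R}$ and every $k\in\mathbb{Z}\setminus\{0\}$, violating injectivity. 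The Riccati solutions blow up in finite time for generic initial data, yielding the incompleteness assertions of (7)--(8). For the embedding halves of (7) and (8), I reuse the map $\Phi$ and reapply the change-of-coordinate rule to verify that it pulls $\nabla^{\mathcal{S}_2}$ back to $\nabla^{\mathcal{S}_1}$ and $\nabla^{\tilde{\mathcal{S}}_3}$ back to $\nabla^{\mathcal{S}_3}$; since the targets are geodesically complete, this realizes the completions.

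The principal obstacle is the classification underlying (1), which rests on the orbit analysis of the polynomial variety $\nabla R=0$ modulo $\operatorname{GL}(2,\mathbb{R})$ and is delegated to \cite{BGGP16,DAGP17}. The most delicate conceptual point in the rest is to keep track of which invariants survive only under linear changes of basis (such as $\tau$, which acquires an inhomogeneous term under a nonlinear diffeomorphism) and which survive under arbitrary affine morphisms (such as the pointwise signature of $\rho$); it is exactly this dichotomy that permits (3) to coexist with the linear inequivalences of (1)--(2) and simultaneously forces the affine inequivalence of (4).
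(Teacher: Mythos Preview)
Your proposal is correct. The paper itself gives no proof of Theorem~\ref{T2.2} beyond the sentence ``The following result follows from work of \cite{BGGP16,DAGP17}''; everything is delegated to those references. You instead supply self-contained arguments for Assertions~(2)--(8) and cite the references only for the orbit classification in~(1). Your computations check out: the Ricci tensors are $\rho=\operatorname{diag}(0,-\tfrac14)$ for $\mathcal{S}_1,\mathcal{S}_2$ and $\rho=\operatorname{diag}(0,1)$ for $\mathcal{S}_3$, so the sign of $\rho$ on its image distinguishes $\mathcal{S}_3$ affinely; the trace form $\tau$ transforms tensorially under $\operatorname{GL}(2,\mathbb{R})$ and has $\tau_1=-1$ for $\mathcal{S}_1$ versus $\tau_1=0$ for $\mathcal{S}_2$, separating them linearly; the pullback verifications for $\Phi(x^1,x^2)=(e^{-x^1},x^2)$ are straightforward; and the explicit integrations of the decoupled geodesic equations give exactly the exponential maps you state. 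What your approach buys is a self-contained treatment of everything except the raw classification~(1), whereas the paper's approach simply outsources the entire theorem; the price is that a reader must trust your invariant $\tau|_{\ker\rho}$, which is indeed only a \emph{linear} invariant (it acquires an inhomogeneous term under general diffeomorphisms), a subtlety you correctly flag in your final paragraph.
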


The geometry $\mathcal{S}_1$ is incomplete. The horizontal axis is a geodesic
which exists for all time to the left but which escapes in finite time to the right. The vertical axis is a geodesic that
exists for all time. If the initial direction is in the first quadrant, geodesics escapes
to the right. If the initial direction is in the third quadrant, the geodesic exists for all time. If the initial direction is in the second
quadrant, the geodesic exists for all time. If the initial direction is in the fourth quadrant and the angle to the vertical is at most
$\frac\pi4$, the geodesic exists for all time. If the initial direction is in the fourth quadrant and the angle to the horizontal is less
than $\frac\pi4$, the geodesic escapes to the right.
The geometry of $\mathcal{S}_2$ is complete; geodesics for $\mathcal{S}_2$ exist for all time and the exponential map is a global diffeomorphism.
\vglue -.2cm\begin{figure}[H]
\caption{Geodesic structure}%\label{Fig1}
\vglue -.2cm$\mathcal{S}_1$\qquad\qquad\qquad\qquad$\mathcal{S}_2$\qquad\qquad\qquad\qquad$\mathcal{S}_3$\qquad\qquad\qquad\qquad$\tilde{\mathcal{S}}_3$\par
\includegraphics[height=3.cm,keepaspectratio=true]{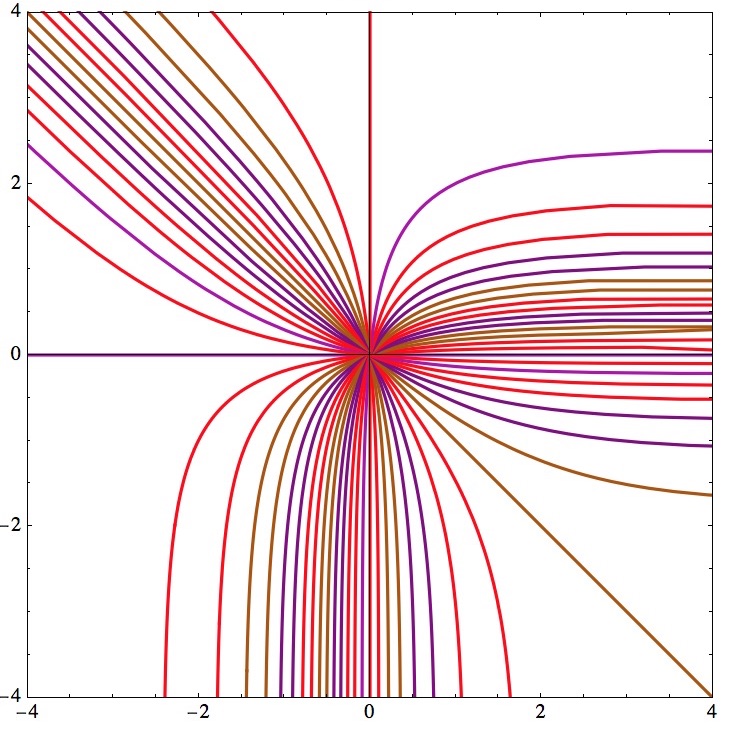}\ 
\includegraphics[height=3.cm,keepaspectratio=true]{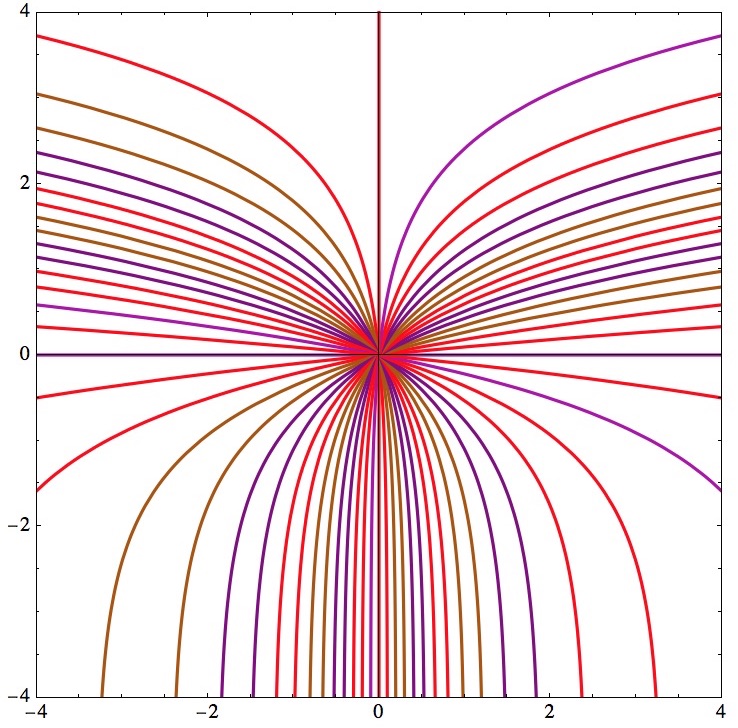}\ 
\includegraphics[height=3.cm,keepaspectratio=true]{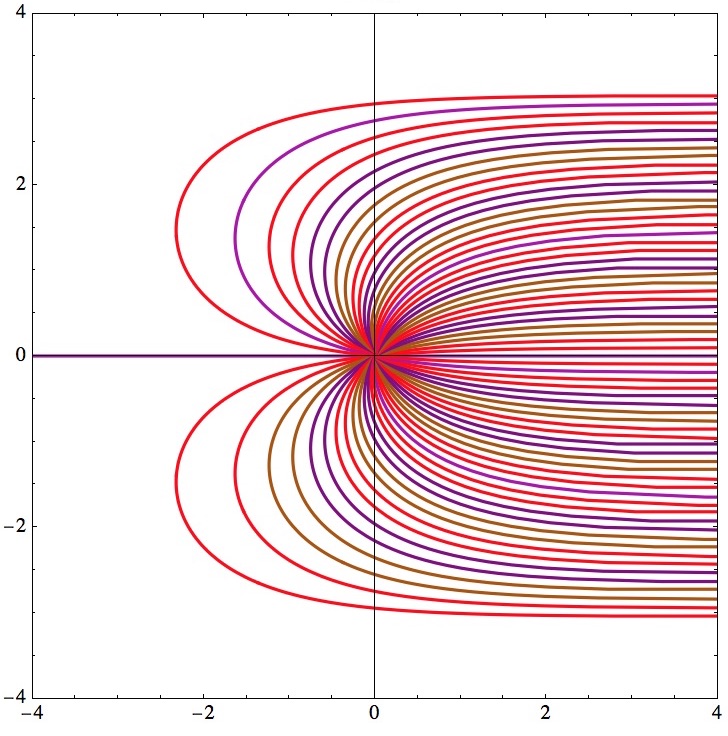}\ 
\includegraphics[height=3.cm,keepaspectratio=true]{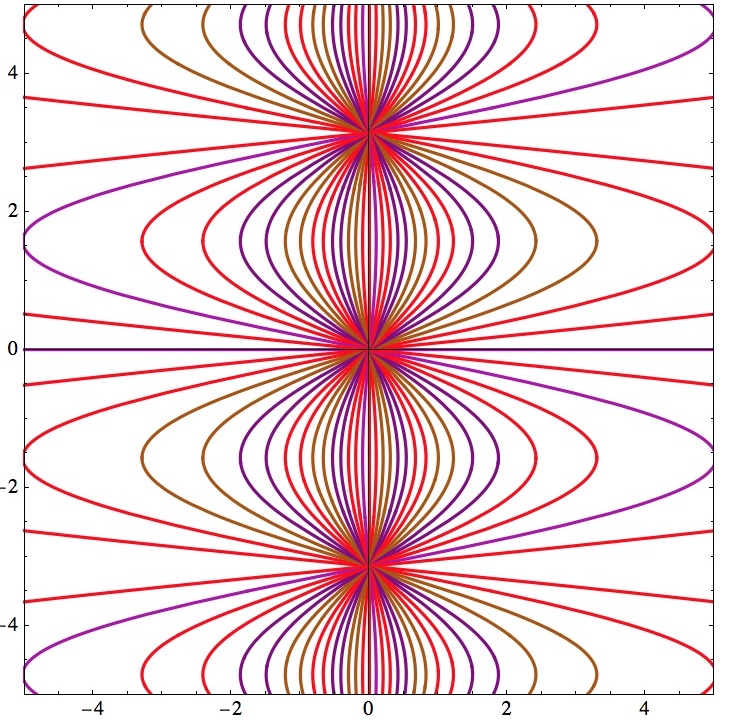}
\end{figure}

The geometry $\mathcal{S}_3$ is incomplete. The horizontal axis escapes to the right but exists for all time to the left.
All the remaining geodesics are U shaped and escape to the right at both ends. The exponential map is not surjective;
geodesics are confined within the horizontal strip $|x^2|<\pi$.
The geometry $\tilde{\mathcal{S}}_3$ is complete. The horizontal axis is in the range of the exponential map. But the punctured horizontal
lines thru the focal points on the vertical at $(0,\pm n\pi)$ are not in the image of the exponential map.
Furthermore, the exponential map is not 1-1.

\section{Type~$\mathcal{B}$ local affine symmetric spaces}\label{S3}
Let $\mathcal{M}=(\mathbb{R}^+\times\mathbb{R},\nabla)$ where
the Christoffel symbols $\Gamma_{ij}{}^k$ of $\nabla$ take the form $\Gamma_{ij}{}^k=\frac1{x^1}C_{ij}{}^k$ where the $C_{ij}{}^k$
are constant. Let $G$ be the $ax+b$ group
$$(x^1,x^2)\rightarrow(ax^1,ax^2+b)$$
for $a>0$ preserves this geometry so this is a homogeneous geometry. $G$ also acts on $\mathbb{R}^2$
sending $(x^1,x^2)\rightarrow(x^1,ax^2+bx^1)$. We say that two Type~$\mathcal{B}$ geometries are linearly isomorphic if they
are intertwined by such a map.
\begin{definition}\rm
Let $\mathcal{S}_4(c)$ (for $c\ne0$) and $\mathcal{S}_5$ be the Type~$\mathcal{B}$ locally symmetric structures on $\mathbb{R}^+\times\mathbb{R}$
obtained by taking non-zero Christoffel symbols:
\begin{eqnarray*}
&&\mathcal{S}_4(c):=\{ C_{11}{}^1=-1,C_{11}{}^2=0,C_{12}{}^1=0,C_{12}{}^2=c,C_{22}{}^1=0, C_{22}{}^2=0\},\\
&&\mathcal{S}_5:=\{ C_{11}{}^1=-1,C_{11}{}^2=1,C_{12}{}^1=0,C_{12}{}^2=-\tfrac12,C_{22}{}^1=0,C_{22}{}^2=0\}\,.
\end{eqnarray*}
\end{definition}
The remainder of this section is devoted to the proof of the following result:

\begin{theorem}\label{T3.1}
A type~$\mathcal{B}$ model
$\mathcal{M}$
is a local affine symmetric space if and only if
it is linearly equivalent to
one of the following examples:
\begin{enumerate}
\item $\mathbb{L}^2:=\{C_{11}{}^1=-1,C_{11}{}^2=0,C_{12}{}^1=0,C_{12}{}^2=-1,C_{22}{}^1=-1,C_{22}{}^2=0\}$.
This geometry is the hyperbolic Lorentzian plane with the upper half plane model, it is
geodesically incomplete, and $\rho=(x^1)^{-2}\operatorname{diag}(-1,1)$.
\item $\mathbb{H}^2:=\{C_{11}{}^1=-1,C_{11}{}^2=0,C_{12}{}^1=0,C_{12}{}^2=-1,C_{22}{}^1=1,C_{22}{}^2=0\}$.
This geometry is the hyperbolic Riemannian plane with the upper half plane model,
it is geodesically complete, and $\rho=(x^1)^{-2}\operatorname{diag}(-1,-1)$.
\item Either $\mathcal{S}_4(c)$ for $c\ne0$ or $\mathcal{S}_5$. 
These geometries are globally isomorphic to the geometry $\mathcal{S}_2$ of Definition~\ref{D2.1},
they are geodesically complete, and $\rho=(x^1)^{-2}\operatorname{diag}(-(C_{12}{}^2)^2,0)$.
\end{enumerate}\end{theorem}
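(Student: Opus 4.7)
The plan is to reduce the classification problem to a system of polynomial equations in the six constants $C_{ij}{}^k$, solve it, and then normalize the solutions under the linear group action preserving the Type~$\mathcal{B}$ form. First, I would exploit the homogeneous scaling structure: direct computation shows that for any Type~$\mathcal{B}$ connection, both the curvature operator and the Ricci tensor have the form
\begin{equation*}
R(\partial_i,\partial_j)\partial_k=(x^1)^{-2}\,\tilde R_{ijk}{}^l\,\partial_l,\qquad \rho_{ij}=(x^1)^{-2}\tilde\rho_{ij},
\end{equation*}
where $\tilde R$ and $\tilde\rho$ are constant tensors depending polynomially on the $C_{ij}{}^k$. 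I would read off $\tilde\rho$ explicitly as a $2\times2$ matrix with entries quadratic in the $C$'s. Since in dimension two $\nabla R=0$ is equivalent to $\nabla\rho=0$ (away from flat points, which we handle separately as a degeneration into Type~$\mathcal{A}$), I may work with the symmetric Ricci alone.

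Next I would expand the equations $\nabla_k\rho_{ij}=0$. The $k=2$ component reduces to the purely algebraic condition
\begin{equation*}
C_{2i}{}^l\tilde\rho_{lj}+C_{2j}{}^l\tilde\rho_{il}=0,
\end{equation*}
while the $k=1$ component, after absorbing the derivative of $(x^1)^{-2}$, becomes
\begin{equation*}
2\tilde\rho_{ij}+C_{1i}{}^l\tilde\rho_{lj}+C_{1j}{}^l\tilde\rho_{il}=0.
\end{equation*}
These two sets of relations, together with the explicit quadratic expression of $\tilde\rho$ in the $C$'s, form a finite polynomial system on the six unknowns $C_{ij}{}^k$. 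I would then bring in the residual symmetry: the $ax+b$ action and the map $(x^1,x^2)\to(x^1,ax^2+bx^1)$ induce a linear action of the upper triangular subgroup of $\operatorname{GL}(2,\mathbb{R})$ on the parameter space $\{C_{ij}{}^k\}$; two Type~$\mathcal{B}$ models are linearly equivalent exactly when their $C$-tuples lie in a common orbit. I would use this to put $C$ into a normal form, typically by exploiting that the eigendata of the matrix $(C_{12}{}^k)$ (viewed as acting on directions in the second slot) is an invariant.

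The solution of the polynomial system, together with orbit analysis, should produce exactly three branches. Branch one, where $\tilde\rho$ has signature $(-,+)$ and is diagonal, gives $\mathbb{L}^2$. Branch two, where $\tilde\rho$ is negative definite, gives $\mathbb{H}^2$. The remaining branch is where $\tilde\rho$ has rank one; I expect this to split further according to whether $(C_{11}{}^1,C_{11}{}^2)$ is proportional to $(-1,0)$ or not, producing the one-parameter family $\mathcal{S}_4(c)$ and the isolated model $\mathcal{S}_5$. For each model I would then verify the asserted closed form of $\rho$ by direct substitution.

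The main obstacle will be the orbit classification: the polynomial system has several parameters and the group action is not free, so care is required in choosing invariants to cut down cases without double-counting. Once the normal forms are in hand, the remaining verifications are routine: for (1) and (2), the underlying metric $(x^1)^{-2}(\pm(dx^1)^2+(dx^2)^2)$ identifies $\mathbb{L}^2$ and $\mathbb{H}^2$, and geodesic completeness of $\mathbb{H}^2$ versus incompleteness of $\mathbb{L}^2$ is classical; for (3), I would produce an explicit change of coordinates, guided by the exponential substitution $(x^1,x^2)\to(-\log x^1,x^2)$ used to embed $\mathcal{S}_1$ into $\mathcal{S}_2$ in Theorem~\ref{T2.2}, to exhibit a global affine isomorphism from $\mathcal{S}_4(c)$ and $\mathcal{S}_5$ onto $\mathcal{S}_2$, from which geodesic completeness follows.
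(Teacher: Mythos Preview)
Your proposal is correct and follows essentially the same route as the paper: compute $\rho=(x^1)^{-2}\tilde\rho$ with $\tilde\rho$ constant, reduce $\nabla\rho=0$ to a polynomial system in the $C_{ij}{}^k$, and normalize via the linear action $(x^1,x^2)\mapsto(x^1,\delta x^1+\gamma x^2)$. The only substantive organizational difference is the choice of case-splitting invariant. The paper does not organize by the signature of $\tilde\rho$; instead it observes that $C_{22}{}^1$ is unchanged by the shear $\delta$ and scales by $\gamma^2$, so its sign is a linear invariant, and it splits into the four cases $C_{22}{}^1>0$, $C_{22}{}^1<0$, $C_{22}{}^1=0\ne C_{22}{}^2$, $C_{22}{}^1=C_{22}{}^2=0$. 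In each case it normalizes early (e.g.\ setting $C_{22}{}^2=0$ via $\delta$) and then kills the components $\nabla\rho_{ijk}$ one at a time, which keeps the algebra to one or two variables at each step rather than a simultaneous system. Your signature-of-$\tilde\rho$ organization would reach the same endpoints but with more parameters live at once. Two minor corrections to your write-up: the affine Ricci tensor need not be symmetric, so you should work with the full $\rho$, not $\rho_s$ (your displayed equations already do this, so it is only a wording slip); and $(C_{12}{}^k)$ is a vector, not a matrix, so the invariant you have in mind is presumably the endomorphism $v\mapsto C(v,\partial_2)$, though in practice the sign of $C_{22}{}^1$ is the cleaner choice. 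For the geodesic completeness in case~(3), the paper does not build the explicit isomorphism to $\mathcal{S}_2$ but simply solves the geodesic ODE directly (it decouples: $x^1(t)=ae^{bt}$, and then $x^2$ satisfies a linear constant-coefficient equation); your proposed coordinate change would also work and is arguably more conceptual.
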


The exponential map for all the geometries except the Lorentzian hyperbolic plane is surjective and 1-1.
The geodesics for $\mathbb{H}^2$ are circles in $\mathbb{R}^+\times\mathbb{R}$ with center on the vertical axis. We set $c=\pm1$ in examining
$\mathcal{S}_4(c)$ to give a labor of the situation.
We postpone until the subsequent section a discussion of $\mathbb{L}^2$.
\vglue -.2cm\begin{figure}[H]
\caption{Geodesic structure}
\vglue -.2cm$\mathbb{H}^2$\qquad\qquad\qquad$\mathcal{S}_4(1)$\qquad\qquad\qquad$\mathcal{S}_4(-1)$\qquad\qquad\qquad$\mathcal{S}_5$\par
\includegraphics[height=3cm,keepaspectratio=true]{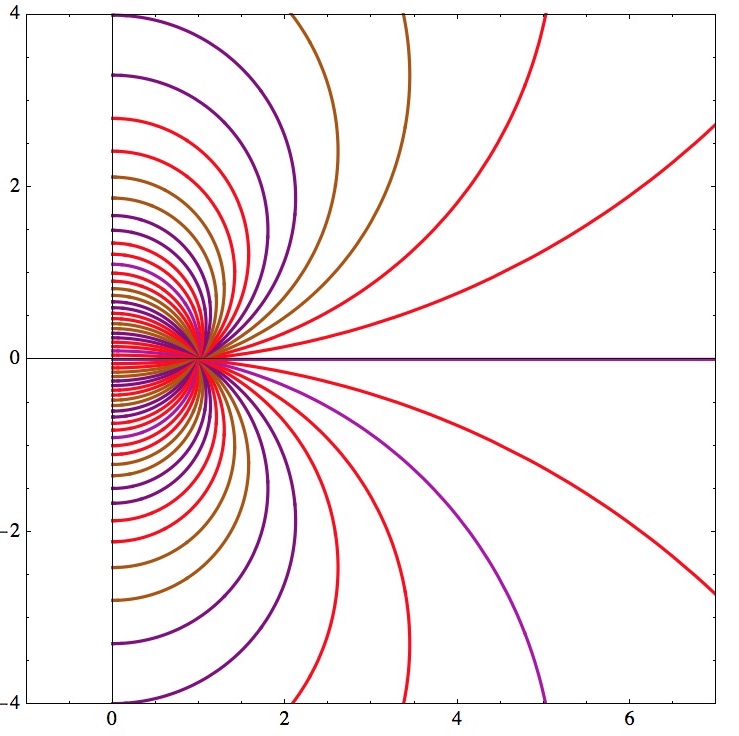}\ 
\includegraphics[height=3cm,keepaspectratio=true]{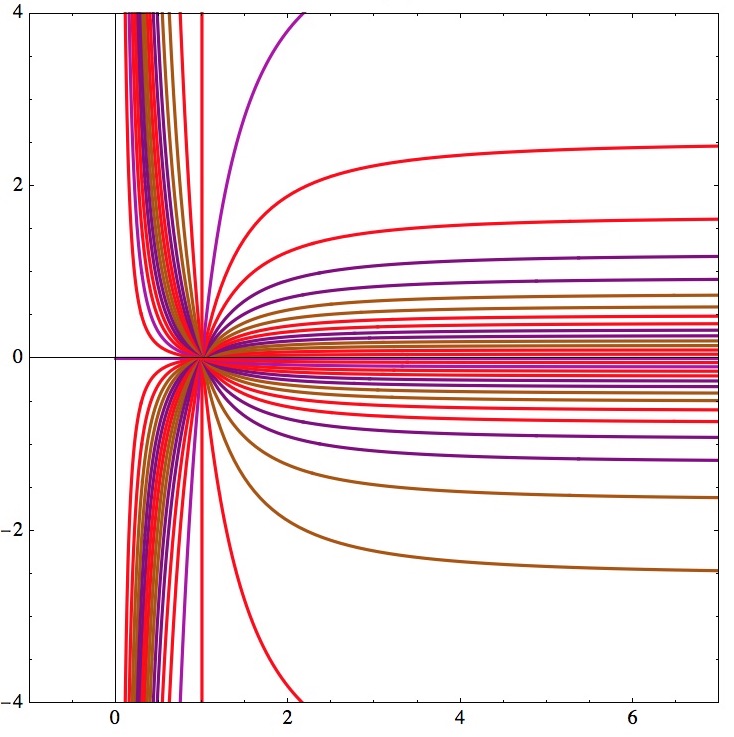}\ 
\includegraphics[height=3cm,keepaspectratio=true]{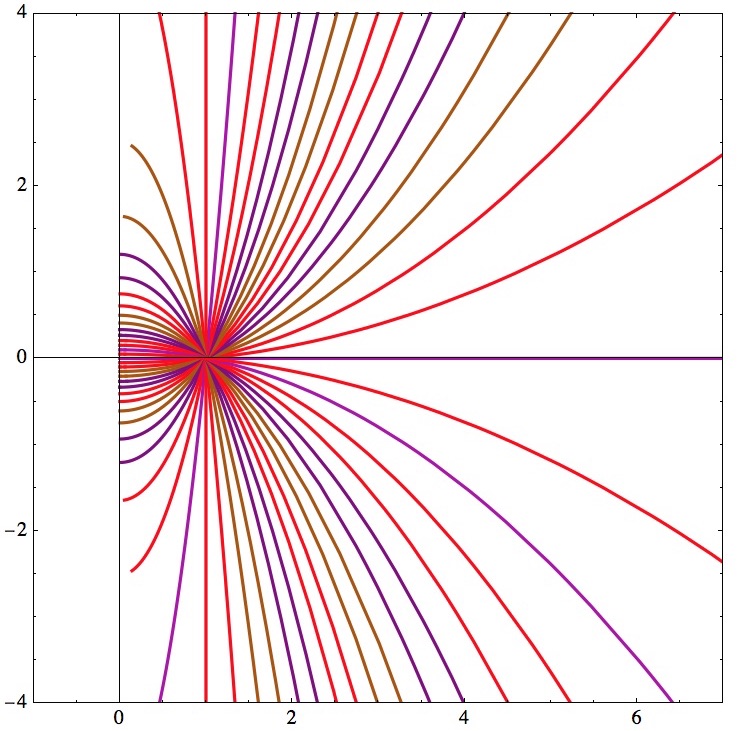}\ 
\includegraphics[height=3cm,keepaspectratio=true]{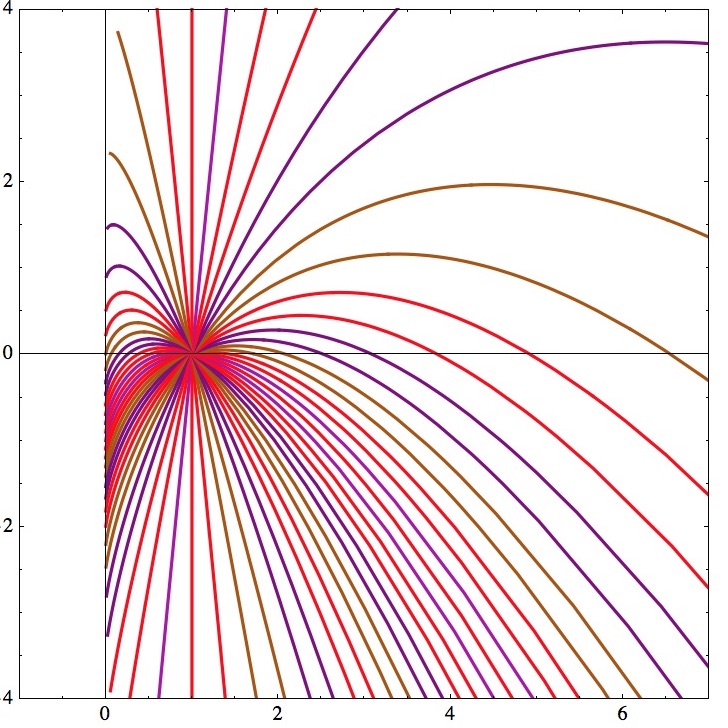}
\end{figure}

\vglue -.2cm The spaces of Type~$\mathcal{B}$ structures on $\mathbb{R}^+\times\mathbb{R}$
 are preserved by linear transformations of the form
$(x^1,x^2)\rightarrow(x^1,\delta x^1+\gamma x^2)$.
Consider the change of variables $w^1=x^1$, $w^2=\delta x^1+x^2$. Let $D_{ij}{}^k:=(T^*C)_{ij}{}^k$ be the expression in
the $w$-coordinate system of the Christoffel symbols in the $x$ coordinate system; $D_{ij}{}^k=C(\partial_{w^i},\partial_{w^j},dw^k)$. We have
\medbreak\qquad
$dw^1=dx^1,\quad dw^2=\delta dx^1+dx^2,\quad
\partial_{w^1}=\partial_{x^1}-\delta\partial_{x^2},\quad\partial_{w^2}=\partial_{dx^2}$,
\smallbreak\qquad$D_{11}{}^1=C_{11}{}^1-2\delta C_{12}{}^1+\delta^2C_{22}{}^1$,\quad
$D_{12}{}^1=C_{12}{}^1-\delta C_{22}{}^1,\ D_{22}{}^1=C_{22}{}^1$,
\smallbreak\qquad$D_{11}{}^2=C_{11}{}^2+\delta(-2C_{12}{}^2+C_{11}{}^1)+\delta^2(C_{22}{}^2-2C_{12}{}^1)+\delta^3C_{22}{}^1$,
\smallbreak\qquad$D_{12}{}^2=C_{12}{}^2+\delta(C_{12}{}^1-C_{22}{}^2)-\delta^2C_{22}{}^1,\ D_{22}{}^2=C_{22}{}^2+\delta C_{22}{}^1$.
\medbreak\noindent We establish Theorem~\ref{T3.1} by considering various cases seriatim. We apply the structure equations
given above.

\subsection*{Case 1}: $C_{22}{}^1>0$. We can rescale to ensure $C_{22}{}^1=1$ and make a linear change of coordinates
$(x^1,x^2)\rightarrow(x^1,x^2+\delta x^1)$ to ensure $C_{22}{}^2=0$. We compute
$$
\nabla\rho_{222}=(x^1)^{-3}\{2C_{11}{}^2+C_{12}{}^1-2C_{12}{}^1C_{12}{}^2\}\,.
$$
Set $C_{11}{}^2=C_{12}{}^1C_{12}{}^2-\frac12C_{12}{}^1$. Since
$$\nabla\rho_{221}=(x^1)^{-3}2C_{12}{}^2\{-C_{11}{}^1+(C_{12}{}^1)^2+C_{12}{}^2\}\,,$$
setting $\nabla\rho_{221}=0$ yields two subcases:
$$
C_{11}{}^1=(C_{12}{}^1)^2+C_{12}{}^2\text{ (Case 1a) or }C_{12}{}^2=0\text{ (Case 1b)}\,.
$$
\smallbreak\noindent{\bf Case 1a:} $C_{11}{}^1=(C_{12}{}^1)^2+C_{12}{}^2$, $C_{11}{}^2=C_{12}{}^1C_{12}{}^2-\frac12C_{12}{}^1$,
$C_{22}{}^1=1$, $C_{22}{}^2=0$. We compute $\nabla\rho_{122}=(x^1)^{-3}2(1+C_{12}{}^2)$ and $\nabla\rho_{212}=-(x^1)^{-3}(C_{12}{}^1)^2$.
Setting $\nabla\rho_{122}=0$ and $\nabla\rho_{212}=0$ yields the model $\mathbb{H}^2$:
\begin{equation*}
C_{11}{}^1= -1,\ C_{11}{}^2= 0,\ C_{12}{}^1= 0,\ C_{12}{}^2= -1,\ C_{22}{}^1= 1,\ C_{22}{}^2= 0
\end{equation*}
\smallbreak\noindent{\bf Case 1b: $C_{12}{}^2= 0$, $C_{11}{}^2=-\frac12C_{12}{}^1$,  $C_{22}{}^1= 1$, $C_{22}{}^2= 0$.} We compute that
$\nabla\rho_{122}=(x^1)^{-3}\{2-2C_{11}{}^1+2(C_{12}{}^1)^2\}$ and $\nabla\rho_{212}=-(x^1)^{-3}(C_{12}{}^1)^2$. Setting $\nabla\rho_{122}=0$ and $\nabla\rho_{212}=0$
yields
$$
C_{11}{}^1=1,\quad C_{11}{}^2=0,\quad C_{12}{}^1=0,\quad C_{12}{}^2=0,\quad C_{22}{}^1=1,\quad C_{22}{}^2=0\,.
$$
The Ricci tensor vanishes so this is impossible.

\subsection*{\bf Case 2: $C_{22}{}^1<0$} Rescale to set $C_{22}{}^1=-1$ and make a linear change of coordinates
$(x^1,x^2)\rightarrow(x^1,x^2+\delta x^1)$ to set $C_{22}{}^2=0$. We compute
$$
\nabla\rho_{222}=(x^1)^{-3}\{2 C_{11}{}^2+C_{12}{}^1 (2 C_{12}{}^2-1)\}\,.
$$
 Setting $\nabla\rho_{222}=0$ yields
$C_{11}{}^2= \frac12{C_{12}{}^1}-C_{12}{}^1 C_{12}{}^2$. Since
$$
\nabla\rho_{221}=(x^1)^{-3}2 C_{12}{}^2 \left(C_{11}{}^1+(C_{12}{}^1)^2-C_{12}{}^2\right)\,,
$$
setting $\nabla\rho_{221}=0$ yields 2 subcases:
$$
C_{11}{}^1= C_{12}{}^2-(C_{12}{}^1)^2\text{ (Case 2a) or }C_{12}{}^2=0\text{ (Case 2b)}\,.
$$
\smallbreak\noindent{\bf Case 2a: $C_{11}{}^1= C_{12}{}^2-C_{12}{}^1 C_{12}{}^1$, $C_{11}{}^2= \frac12{C_{12}{}^1}-C_{12}{}^1 C_{12}{}^2$,
 $C_{22}{}^1= -1$, $C_{22}{}^2= 0$.} We compute $\nabla\rho_{122}=-2(x^1)^{-3}(1+C_{12}{}^2)$ and $\nabla\rho_{212}=-(x^1)^{-3}C_{12}{}^1$. Setting
 $\nabla\rho_{122}=0$ and $\nabla\rho_{212}=0$ yields the model $\mathbb{L}^2$:
$$
 C_{11}{}^1=-1,\ C_{11}{}^2=0,\ C_{12}{}^1=0,\ C_{12}{}^2=-1,\ C_{22}{}^1=-1,\ C_{22}{}^2=0\,.
$$

\smallbreak\noindent{\bf Case 2b:  $C_{22}{}^1= -1$, $C_{22}{}^2= 0$, $C_{11}{}^2= \frac12{C_{12}{}^1}$, $C_{12}{}^2= 0$.} We compute that
$\nabla\rho_{122}=2(x^1)^{-3}\{C_{11}{}^1+(C_{12}{}^1)^2-1\}=0$ and $\nabla\rho_{212}=-(x^1)^{-3}(C_{12}{}^1)=0$.
This implies $C_{11}{}^1= 1$, $C_{11}{}^2= 0$, $C_{12}{}^1= 0$, $C_{12}{}^2= 0$, $C_{22}{}^1= -1$, $C_{22}{}^2= 0$.
The Ricci tensor is then zero so this case is impossible.
\smallbreak\noindent{\bf Case 3: $C_{22}{}^1=0$ and $C_{22}{}^2\ne0$} We rescale to assume $C_{22}{}^2=1$. We obtain
$\nabla\rho_{222}=2(x^1)^{-3} (C_{12}{}^1-1) C_{12}{}^1=0$. Consequently we obtain two subcases:
\begin{equation*}
C_{12}{}^1=1\text{ (Case 3a) or }C_{12}{}^1=0\text{ (Case 3b)}\,.
\end{equation*}
\smallbreak\noindent{\bf Case 3a: $C_{12}{}^1=1$, $C_{22}{}^1=0$ and $C_{22}{}^2=1$}. We obtain
$$
\nabla\rho_{212}=-2(x^1)^{-3} (C_{12}{}^2+1)=0\text{ and }
\nabla\rho_{122}=-2 (x^1)^{-3}C_{12}{}^2=0
$$
which is impossible.

\smallbreak\noindent{\bf Case 3b: $C_{12}{}^1=0$,  $C_{22}{}^1=0$ and $C_{22}{}^2=1$.} We obtain $\nabla\rho_{212}=-1$. This case is impossible.

\subsection*{\bf Case 4: $C_{22}{}^1=0$ and $C_{22}{}^2=0$} We obtain  $\nabla\rho_{221}=(x^1)^{-3}(C_{12}{}^1)^2$ setting $\nabla\rho_{221}=0$ shows
that $C_{12}{}^1=0$ so by Theorem~3.11 of \cite{BGGP16} this is Type~$\mathcal{A}$ and the analysis of Section~\ref{S2} pertains.
Let $\tilde\rho=(x^1)^2\rho$; the entries of $\tilde\rho$ are constant for a Type~$\mathcal{B}$ geometry. We have
$$
\rho=(x^1)^{-2}\operatorname{diag}(\tilde\rho_{11},0)\text{ and }
\nabla\rho=-2(x^1)^{-3}(1+C_{11}{}^1)\tilde\rho_{11}dx^1\otimes dx^1\otimes dx^1\,.
$$
Thus $C_{11}{}^1= -1$ and $\tilde\rho_{11}=-(C_{12}{}^2)^2$. Thus we require $C_{12}{}^2\ne0$. We have
\begin{equation*}
C_{11}{}^1=-1,\ C_{12}{}^1=0,\ C_{22}{}^1=0,\ C_{22}{}^2=0\,.
\end{equation*}
The structure equations become in this case:
\begin{equation*}\begin{array}{lll}
D_{11}{}^1=-1,&D_{12}{}^1=0,&D_{22}{}^1=0,\\[0.05in]
D_{11}{}^2=C_{11}{}^2+\delta(-2C_{12}{}^2-1),&D_{12}{}^2=C_{12}{}^2,&D_{22}{}^2=0.
\end{array}\end{equation*}
If $C_{12}{}^2\ne-\frac12$, we can use this to normalize $C_{11}{}^2=0$ and obtain the models $\mathcal{S}_4(c)$ of Assertion~(3a).
If $C_{12}{}^2=-\frac12$, $C_{11}{}^2$ is either zero or can be normalized to $1$, so we obtain either one of the models $\mathcal{S}_4(c)$
of Assertion~(3a) or the model $\mathcal{S}_5$ of Assertion~(3b).

A direct computation yields the Ricci tensors. The Ricci tensors for $\mathbb{L}^2$ and $\mathbb{H}^2$ are non-degenerate and
symmetric. Thus by Theorem~\ref{T1.1}, the connections are the Levi-Civita connections of these metrics. We can change the
sign if necessary. Thus $\mathbb{H}^2$ corresponds to the metric $ds^2=(x^1)^{-2}((dx^1)^2+(dx^2)^2)$ and is the hyperbolic
plane; this is known to be geodesically complete. Similarly $\mathbb{L}^2$ corresponds to the metric $ds^2=(x^1)^{-2}(-(dx^1)^2+(dx^2)^2)$.
Consider the curve
$\sigma(t)=t^{-1}(1,1)$ in $\mathbb{L}^2$. We verify the geodesic equations are
satisfied to see $\mathbb{L}^2$ is geodesically incomplete:
\begin{eqnarray*}
&&x^1\ddot x^1+C_{ij}{}^1\dot x^i\dot x^j=t^{-4}\{2-1-1\}=0,\\
&&x^1\ddot x^2+C_{ij}{}^2\dot x^i\dot x^j=t^{-4}\{2-2\}=0\,.
\end{eqnarray*}
Finally suppose $\mathcal{N}$ is as in Assertion~(3). The geodesic equation for $x^1$ takes the form
$x^1\ddot x^1-\dot x^1\dot x^1=0$. We solve this by taking $x^1(t)=ae^{bt}$ for $a>0$. The geodesic equation for $x^2$ becomes
\begin{equation*}\ddot x^2+2bC_{12}{}^2\dot x^2+ab^2e^{bt}C_{11}{}^2=0\,.\end{equation*}
Set $x^2(t)=f(t)e^{bt}$. The equations then become
\begin{equation*}\ddot f+2b\dot f+b^2f+2bC_{12}{}^2\{\dot f+bf\}+ab^2C_{11}{}^2=0\,.\end{equation*}
This is a constant coefficient ordinary differential equation; the solution is defined for all $t$ with arbitrary initial conditions.
\hfill\qed

\section{The geodesic structure of the Lorentzian hyperbolic plane}\label{S4}
The Lorentzian hyperbolic plane is the only non-complete symmetric space of Type~$\mathcal{B}$ and
the exponential map is not surjective although it is 1-1. We present the following picture of the geodesic structure;
the line $x^1=0$ (which is the vertical axis) is the boundary of $\mathbb{L}^2$. When making plots of the geodesics, 
we will take $(1,0)$ as the base point; since the geometry
is homogeneous, the choice of base point is irrelevant. The symmetric Ricci tensor is $(x^1)^{-2}\operatorname{diag}(-1,1)$.
If we use this tensor to give $\mathbb{L}^2$ a pseudo-Riemannian structure, then the associated Levi--Civita connection is the connection
described in Theorem~\ref{T3.1}~(1). Let $X=\xi_1\partial_{x^1}+\xi_2\partial_{x^2}$ be a tangent vector. $X$ is null if $\xi_1=\pm\xi_2$,
$X$ is timelike if $|\xi_1|>|\xi_2|$, and $X$ is spacelike if $|\xi_1|<|\xi_2|$.
\begin{theorem}\label{T4.1}
Adopt the notation given above. The geodesics of $\mathbb{L}^2$ have one of the following forms for some $\alpha,\beta,c\in\mathbb{R}$, modulo
reparametrization:
\begin{enumerate}
\item $\sigma(t)=(e^t,\alpha)$ for $-\infty<t<\infty$. This geodesic is complete.
\item $\sigma(t)=(t^{-1},\pm t^{-1}+\alpha)$ for $0<t<\infty$. This geodesic is incomplete at one end and complete at the other end.
\item $\sigma(t)=(\frac1{c\,\sinh(t)},\pm\frac{\coth(t)}c+\beta)$ for $t\in(0,\infty)$ and $c>0$. This tends asymptotically to the line
$x^1=0$ as $t\rightarrow\infty$ and escapes to the right as $t\rightarrow0$. These geodesics are incomplete at one end and complete at the other.
These geodesics all have infinite (and negative) length.
\item $\sigma(t)=(\frac1{c\sin(t)}, \pm\frac{\cot(t)}c+\beta)$ for $t\in(0,\pi)$ and $c>0$.  These geodesics escape upwards and to the right as $t\rightarrow0$
and downwards and to the right as $t\rightarrow\pi$. The geodesic $\sigma$ is incomplete at both ends and has total length $\pi$.
\item The geodesics in {\rm(3)} and {\rm(4)} solve the equation $(x^1)^2-\frac\lambda{c^2}=(x^2+\beta)^2$ and are hyperbolas; the geodesic is ``vertical"
if $\lambda=+1$, ``horizontal" if $\lambda=-1$, and null if $\lambda=0$.
\end{enumerate}
\end{theorem}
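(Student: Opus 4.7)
The plan is to integrate the geodesic system directly by extracting two first integrals. Writing out $\ddot x^k+\Gamma_{ij}{}^k\dot x^i\dot x^j=0$ with the Christoffel symbols from Theorem~\ref{T3.1}(1) reduces the system to $x^1\ddot x^1=(\dot x^1)^2+(\dot x^2)^2$ and $x^1\ddot x^2=2\dot x^1\dot x^2$. The second equation is precisely $(d/dt)[\dot x^2/(x^1)^2]=0$, so $\dot x^2=K(x^1)^2$ for some constant $K\in\mathbb R$. Since Theorem~\ref{T1.1}(2) identifies $\nabla$ with the Levi--Civita connection of $\rho_s=(x^1)^{-2}\operatorname{diag}(-1,1)$, the quadratic energy $E:=(x^1)^{-2}(-(\dot x^1)^2+(\dot x^2)^2)$ is also conserved. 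Combining the two integrals yields $(\dot x^1)^2=K^2(x^1)^4-E(x^1)^2$, reducing the problem to a separable first-order ODE.

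Eliminating the parameter via $dx^2/dx^1=\dot x^2/\dot x^1$ and integrating yields, whenever $K\ne 0$, the orbit equation $(x^1)^2-E/K^2=(x^2+\beta)^2$, which is assertion~(5) with $\lambda:=\operatorname{sign}(E/K^2)$. The four families then come from cases on $(K,E)$. If $K=0$, then $x^2$ is constant and integrating $x^1\ddot x^1=(\dot x^1)^2$ gives $x^1=e^t$ after reparametrization, producing assertion~(1). If $E=0$ and $K\ne 0$, the orbit is the pair of null lines $x^2+\beta=\pm x^1$; combined with $\dot x^2=K(x^1)^2$ and rescaling, this yields assertion~(2). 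If $E\ne 0$ and $K\ne 0$, the substitutions $x^1=1/(c\sinh t)$ for $E<0$ and $x^1=1/(c\sin t)$ for $E>0$, with $c:=|K|/\sqrt{|E|}$, are suggested by the identities $\cosh^2-\sinh^2=1$ and $\cos^2+\sin^2=1$ matching the hyperbolic orbit equations. Either substitution converts $(\dot x^1)^2=K^2(x^1)^4-E(x^1)^2$ into $\dot t^2=1$, so $t$ is an affine parameter, and the explicit formulas of~(3) and~(4) follow by substituting the expression for $x^1$ into the orbit equation.

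Finally, I would verify the completeness and length claims by inspecting parameter ranges. Assertion~(1) is complete because $t$ ranges over all of $\mathbb R$. In~(2),~(3), and~(4) the parameter interval has a finite endpoint at which $x^1$ or $x^2$ diverges, so the geodesic escapes $\mathbb L^2$ in finite affine time and is incomplete there; at the infinite endpoint in~(2) and~(3) the geodesic approaches the boundary $\{x^1=0\}$ but the affine parameter runs to $\infty$, so the geodesic is affine-complete at that end. Because the substitutions force $|E|=1$, the arclength element $\sqrt{|E|}\,dt$ reduces to $dt$, so the total length equals the $t$-interval --- infinite in~(3) and equal to $\pi$ in~(4). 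The main subtlety is really bookkeeping: matching the sign of $E$ to the causal type of the tangent (timelike $\Leftrightarrow E<0$, spacelike $\Leftrightarrow E>0$) and checking that these parametrized families, together with the $ax+b$ action and the reflection $x^2\mapsto -x^2$, exhaust every maximally extended geodesic --- which is why a single constant $\beta$ per family, rather than two, suffices.
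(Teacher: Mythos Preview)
Your approach is essentially identical to the paper's: both extract the first integral $\dot x^2=c(x^1)^2$ from the second geodesic equation, invoke constancy of the energy $(\dot x^2)^2-(\dot x^1)^2=\lambda(x^1)^2$, and then split into the cases $c=0$, $\lambda=0$, $\lambda=-1$, $\lambda=+1$ to produce (1)--(4), with (5) obtained by separating variables in $dx^2/dx^1$. The only cosmetic difference is ordering---the paper writes down the explicit $\sinh$/$\sin$ parametrizations first and derives the hyperbola equation afterward, whereas you derive the orbit equation first and read off the parametrizations from it; and your phrasing ``the substitutions force $|E|=1$'' would be cleaner stated as ``the specific parametrizations in the statement have $|E|=1$,'' but the length computation is correct either way.
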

We picture the geodesic structure below; the region omitted by the exponential map is shaded in the picture. The ``vertical" geodesics point up
(resp. below) and to the right and down; they lie above and below the half lines with slope $\pm\frac\pi4$. The ``horizontal" geodesics point to the right and the left and lie between the half lines with slope $\pm\frac\pi4$.
\goodbreak\begin{figure}[H]
\caption{Geodesic in $\mathbb{L}^2$}\label{Fig3}
\hfill Horizontal\hfill Vertical \hfill All\qquad\hfill\vphantom{.}\par
\includegraphics[height=3.2cm,keepaspectratio=true]{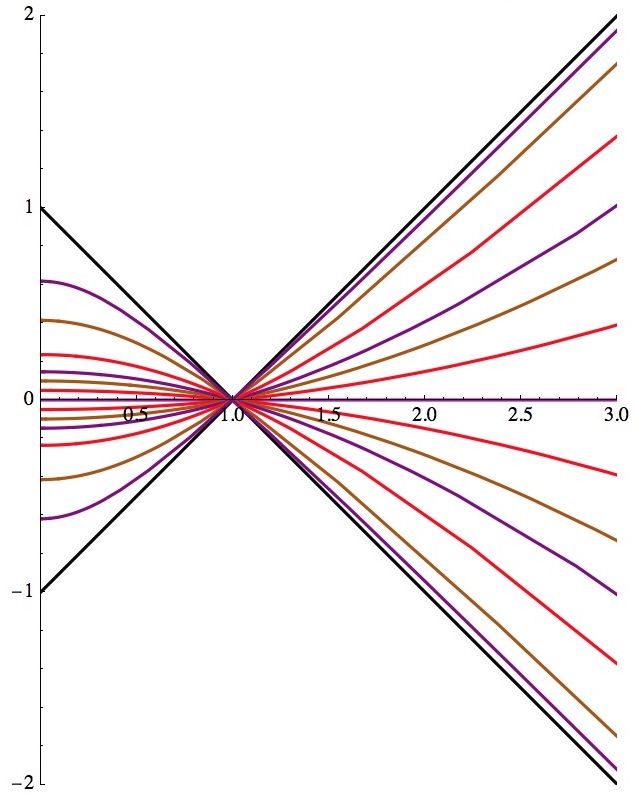}\quad
\includegraphics[height=3.2cm,keepaspectratio=true]{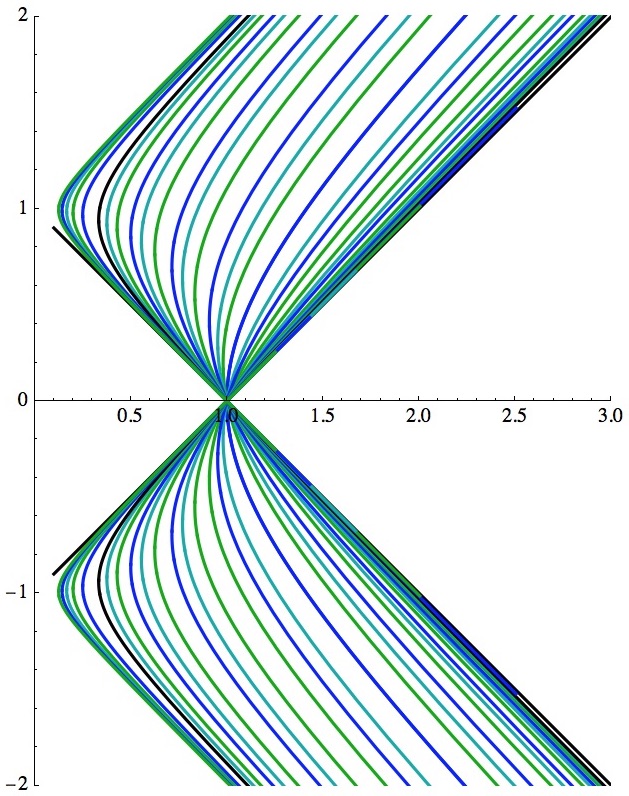}\quad
\includegraphics[height=3.2cm,keepaspectratio=true]{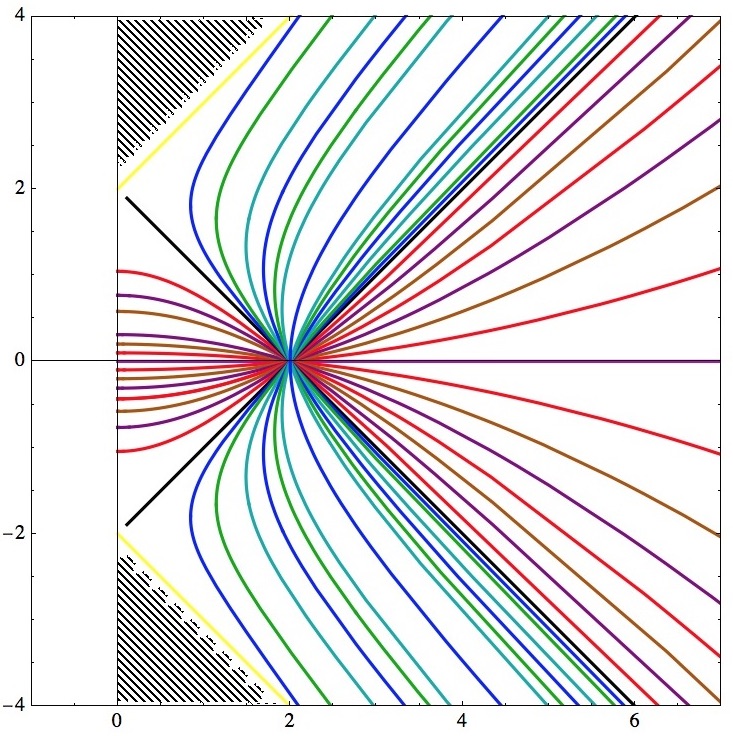}
\end{figure}

The remainder of this section is devoted to the proof of Theorem~\ref{T4.1}.
The non-zero Christoffel symbols are given by
$$
\textstyle\Gamma_{11}{}^1=-\frac1{x^1},\quad\Gamma_{12}{}^2=-\frac1{x^1},\quad\Gamma_{22}{}^1=-\frac1{x^1}\,.
$$
Thus the geodesic equation becomes, after clearing denominators,
$$
x^1\ddot x^1-(\dot x^1)^2-(\dot x^2)^2=0\text{ and }
x^1\ddot x^2-2\dot x^1\dot x^2=0\,.
$$
We integrate the relation $x^1\ddot x^2-2\dot x^1\dot x^2=0$ to see $\dot x^2=c(x^1)^2$ for some $c\in\mathbb{R}$. Since
geodesics have constant speed, we have $(\dot x^2)^2-(\dot x^1)^2=\lambda(x^1)^2$ for some $\lambda\in\mathbb{R}$.
Thus we can replace the relations given above by the following system of first order equations:
\begin{equation}\label{E4.a}
(\dot x^2)^2-(\dot x^1)^2=\lambda(x^1)^2\text{ and }\dot x^2=cx^1x^1\,.
\end{equation}
\smallbreak{\bf Step 1.} Suppose $c=0$ in Equation~(\ref{E4.a}) so that $\dot x^2$ vanishes identically. We then
obtain $\dot x^1=\alpha x^1$ where $\alpha^2=-\lambda$. This implies $x^1(t)=e^{\alpha t+\beta}$ and
$x^2(t)=\gamma$. To ensure the geodesic is non-trivial, we must have $\alpha\ne0$.
These are the horizontal open half lines of Assertion~(1). The geodesic is defined for all $t\in\mathbb{R}$. We therefore
suppose $c\ne0$ henceforth in Equation~(\ref{E4.a}).

\smallbreak{\bf Step 2.} Suppose $\lambda=0$ in Equation~(\ref{E4.a})
We have $\dot x^2=\pm\dot x^1$ and $\dot x^1=c(x^1)^2$. Since $c\ne0$, the geodesic is non-trivial. By rescaling the parameter $t$,
we can assume $c=-1$. The solutions to the equation $\dot x^1=-(x^1)^2$ take the form $x^1(t)=\frac1{t+b}$. By shifting $t$, we may assume $b=0$.
Thus $\sigma(t)=(\frac1t,\pm\frac1t+\alpha)$ for some $\alpha\in\mathbb{R}$. These are the half lines with slope $\pm\frac\pi4$. They approach
the vertical axis asymptotically in one direction but explode to the right in finite time. They are described by Assertion~(2).
We therefore suppose $\lambda\ne0$ henceforth in Equation~(\ref{E4.a}).

\smallbreak{\bf Step 3.} Suppose $\lambda=-1$. We can combine the two equations in \eqref{E4.a} to obtain
$$
(\dot x^1)^2-(x^1)^2-c^2(x^1)^4=0\,.
$$
After reparametrization, these geodesics take the form
$$
x^1(t)=\frac1{{ c\,\sinh(t)}}\text{ and }x^2(t)=\pm\frac{\coth(t)}c+\beta\text{ for }t\in(0,\infty)\,.
$$
Assertion~(3) follows. Let { $t_0=\operatorname{arcsinh}(c^{-1})$; then $x^1(t_0)=1$}. The geodesics
$$
x^1(t)=\frac1{{ c\,\sinh(t)}}\text{ and }x^2(t)=\pm\left\{\frac{\coth(t)}c-\frac{{ \coth(t_0)}}c\right\}
$$
parametrize the ``horizontal" geodesics through the point $(1,0)$ (see Figure~\ref{Fig3}).

\smallbreak{\bf Step 4.} Suppose { $\lambda=1$} in Equation~(\ref{E4.a}). We obtain the equation
$$
(\dot x^1)^2+(x^1)^2-c^2(x^1)^4=0\,.
$$
After reparametrization, these geodesics take the form:
$$
x^1(t)=\frac1{{ c\,\sin(t)}}\text{ and }x^2(t)={ \pm}\frac{\cot(t)}c+\beta\,.
$$
Assertion~(4) now follows. Let { $t_0=\arcsin{(c^{-1})}$; then $x^1(t_0)=1$}. The geodesics
$$
x^1(t)=\frac1{{ c\,\sin(t)}}\text{ and }x^2(t)=\pm\left\{\frac{\cot(t)}c-\frac{{ \cot(t_0)}}c\right\}
$$
parametrize the { ``vertical"} geodesics through the point $(1,0)$. We refer to Figure~\ref{Fig3}.

\smallbreak{\bf Step 5.} We suppose that $c\ne0$ so we are not dealing with the rays of slope $\pm\frac\pi4$. We use
Equation~(\ref{E4.a}) to see:
\begin{eqnarray*}
&&(\dot x^1)^2={ c^2(x^1)^4-\lambda(x^1)^2}\text{ and }\dot x^2=c(x^1)^2,\\
&&\frac{\partial x^1}{\partial x^2}=\pm\frac{\sqrt{{ c^2(x^1)^4-\lambda(x^1)^2}}}{c(x^1)^2},\\
&&dx^2=\pm\frac{cx^1}{\sqrt{{ c^2(x^1)^2-\lambda}}}dx^1
=\pm\frac1cd\left\{\sqrt{{ c^2(x^1)^2-\lambda}}\right\},\\
&&\pm c(x^2+\beta)=\sqrt{{ c^2(x^1)^2-\lambda}},\\
&&{ c^2(x^1)^2-\lambda}=c^2(x^2+\beta)^2\,.
\end{eqnarray*}
This is the equation of a hyperbola. Furthermore, the slope of this hyperbola at infinity is $\pm1$. Thus the geodesics are the part of a straight line
or the part of a hyperbola lying in the right half plane.\qed
\begin{remark}\rm
Let $\tilde T{ (x^1,x^2)}:=\frac{(x^1,-x^2)}{(x^1)^2-(x^2)^2}$. 
This map is a geodesic involution about the
point $(1,0)$. The domain of $\tilde T$ is pictured below in Figure~\ref{Fig4} where we have removed the part of $\mathbb{L}^2$ not in the domain. It would let us
convert horizontal geodesics which escape to the right into
geodesics which could be completed to the left. For example, we showed previously that $\sigma_\pm(t):=(\frac1t,\pm(\frac1t-1))$.
We have
$$
{ \tilde T\sigma_\pm(t)}
=\frac{(-\frac1t,\pm(\frac1t-1))}{(\frac1t-1)^2-(\frac1t)^2}=\frac{(-\frac1t,\pm(\frac1t-1))}{1-\frac2t}
=\frac{(-1,\pm(1-t))}{t-2}={ \sigma_\pm(2-t)}\,.
$$
These are null geodesics thru $(1,0)$ with natural domain $(-\infty,2)$ so we have ``removed" the apparent singularity at zero through analytic continuation.
\goodbreak\begin{figure}[H]
\caption{Domain of $\tilde T$}\label{Fig4}
\centerline{\includegraphics[height=4cm,keepaspectratio=true]{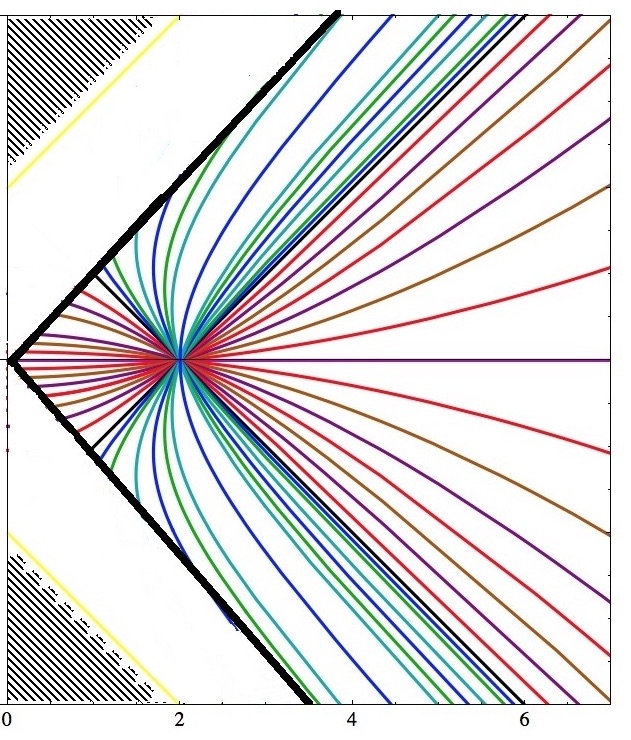}}
\end{figure}
\end{remark}

\section{The exponential map for the Lorentzian hyperbolic plane}\label{S5}

\begin{theorem}\label{T5.1}
\ \begin{enumerate}
\item The exponential map is an embedding of $T_PM$ for any $P$ in $M$.
\item It is not onto
but omits a region in the plane. 
\item If, for example, $P=(1,0)$, the exponential map omits (see Figure~\ref{Fig3}), the regions $x^2\ge1+x^1$ and $x^2\le-1-x^1$.
\end{enumerate}
\end{theorem}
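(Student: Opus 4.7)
The approach is to use the explicit geodesic formulas of Theorem~\ref{T4.1}. By homogeneity of $\mathbb{L}^2$ under the $ax+b$ group, it suffices to treat the base point $P=(1,0)$.

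First, one catalogues the geodesics through $P$ by the causal character of the initial velocity $v=(\xi_1,\xi_2)\in T_PM$. Evaluating Equation~(\ref{E4.a}) at $P$ gives $c=\xi_2$ and $\lambda=\xi_2^2-\xi_1^2$, and the corresponding geodesic is one of the four types of Theorem~\ref{T4.1}: a horizontal line (if $\xi_2=0$), a null line (if $\xi_1=\pm\xi_2\ne 0$), a timelike ``horizontal'' hyperbola (if $|\xi_1|>|\xi_2|>0$), or a spacelike ``vertical'' hyperbola (if $|\xi_2|>|\xi_1|$). Reparametrizing each solution so the geodesic starts at $P$ with velocity $v$ yields a closed-form expression for $\exp_P(v)$, defined on an open subset $\mathcal{D}\subset T_PM$.

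The image computation is the heart of the argument. The common hyperbola equation $(x^1)^2-\lambda/c^2=(x^2+\beta)^2$ of Theorem~\ref{T4.1}~(5) forces, for any point $(x_0^1,x_0^2)\in\mathbb{L}^2$ with $x_0^2\ne 0$, the parameter value $\beta=\{(x_0^1)^2-(x_0^2)^2-1\}/(2x_0^2)$. Sorting this through the sign of $\lambda$, the value of $|\beta|$ (spacelike requires $|\beta|<1$, timelike requires $|\beta|>1$), and the two branches of the horizontal hyperbolas, one verifies that each point with $|x_0^2|<1+x_0^1$ is reached by exactly one geodesic through $P$, while no point with $|x_0^2|\ge 1+x_0^1$ is reached. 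This proves Assertions~(2) and~(3): the omitted region is $\{x^2\ge 1+x^1\}\cup\{x^2\le -1-x^1\}$, and $\exp_P$ is injective.

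For Assertion~(1), it remains to show $\exp_P$ is a local diffeomorphism on $\mathcal{D}$, whereupon the injection becomes an embedding onto its open image. The argument is via Jacobi fields: $\mathbb{L}^2$ is a symmetric space of constant Gauss curvature $K=+1$, so along a geodesic with $g(\dot\sigma,\dot\sigma)=\epsilon\in\{-1,0,+1\}$ the normal Jacobi equation reduces to $\ddot f+\epsilon f=0$. Conjugate points occur only in the spacelike case ($\epsilon=+1$), at arc length $\pi$; since the total length of the vertical hyperbolas of Theorem~\ref{T4.1}~(4) is exactly $\pi$, the only conjugate pairs are the (missing) endpoints of the geodesic, which lie outside its maximal parameter interval. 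Hence $\exp_P$ has no critical points on $\mathcal{D}$. The main technical work will lie in the image calculation: verifying uniqueness of the geodesic from $P$ to each reached point demands careful bookkeeping of the horizontal-hyperbola branches near the null rays $\xi_1=\pm\xi_2$, where the timelike and spacelike cases abut one another.
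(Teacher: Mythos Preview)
Your proposal is correct and follows essentially the same strategy as the paper: a Jacobi-field analysis to rule out conjugate points (exploiting that the spacelike geodesics of Theorem~\ref{T4.1}(4) have total length exactly $\pi$), together with the hyperbola formula $(x^1)^2-\lambda/c^2=(x^2+\beta)^2$ to control which points are reached and to prove injectivity. The paper carries out the same $\beta$-computation you describe; your formula $\beta=\{(x_0^1)^2-(x_0^2)^2-1\}/(2x_0^2)$ is exactly what appears there.

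The only organizational difference is that the paper separates injectivity from the image determination: it first argues geometrically that geodesics of different causal type through $(1,0)$ are confined to disjoint quadrants cut out by the null rays (so mixed-type intersections are impossible), and then uses the $\beta$-formula only to handle same-type pairs; the image is treated afterwards by a forward check that the vertical hyperbolas never meet the line $x^2=x^1+1$. Your version is more economical, folding both into a single backward computation (solve for $\beta$, read off the sign of $\lambda$ from $|\beta|\lessgtr1$, then sort the horizontal branches). Both routes work; yours is slightly more algebraic, the paper's slightly more geometric, but the essential content---and the place where the care is needed, namely the branch bookkeeping you flag---is the same.
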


The remainder of this section is devoted to the proof of Theorem~\ref{T5.1}. We take the point in question
to be $(1,0)$; this choice is inessential as $\mathbb{L}^2$ is homogeneous.

\smallbreak\noindent{\bf Step 1.} Let $F(\xi):=\exp_{(1,0)}(\xi)$ for $\xi\in T_{(1,0)}(\mathbb{R}^+\times\mathbb{R})$ and $\xi$ in
the domain of $\exp_{(1,0)}$; the relevant domain is discussed carefully in Section~\ref{S4}. We examine the structure of the
Jacobi vector fields to show that $dF(\xi)$ is non-singular. We have $\rho=\operatorname{diag}(-1,1)$ at $(1,0)$. Let $\sigma$
be a geodesic with initial point $(1,0)$ and let $\{e_1,e_2\}$ be parallel vector fields along $\sigma$ with $e_1(0)=\partial_{x^1}$
and $e_2(0)=\partial_{x^2}$. As $\nabla R=0$, the matrix of the curvature operator is constant on a parallel frame.
As $\{e_1,e_2\}$ is an orthonormal frame and the sectional curvature of $\mathbb{L}^2$ is constant, one has:
$$
R(e_1,e_2)e_2=e_1\text{ and }R(e_2,e_1)e_1=-e_2\,.
$$
Let $Y(t)$ be a vector field along $\sigma$. We say that $Y$ is a {\it Jacobi vector field along} $\sigma$ if $Y$ satisfies the equation:
$$
\ddot Y(t)+R(Y(t),\dot\sigma(t))\dot\sigma(t)=0\,.
$$
We must show there are no non-trivial Jacobi vector fields along $\sigma$ with $Y(0)=0$ and $Y(t)=0$ for $t>0$.
\smallbreak\noindent{\bf Step 1a.} Suppose $\sigma$ is a null geodesic. We suppose $\dot\sigma=e_1(t)+e_2(t)$ as the
case $\dot\sigma=e_1-e_2$ is similar. Let $f_1(t)=e_1(t)+e_2(t)$ and $f_2(t)=e_1(t)-e_2(t)$. Let
$Y(t)=a_1(t)f_1(t)+a_2(t)f_2(t)$. We compute:
\begin{eqnarray*}
&&R(f_2,f_1)f_1=R(e_1-e_2,e_1+e_2){ (e_1+e_2)}=2R(e_1,e_2){ (e_1+e_2)}\\
&&\qquad=2(e_1+e_2)=2f_1,\\
&&{ \ddot Y+R(Y,\dot\sigma)\dot\sigma=(\ddot a_1+2a_2)f_1+\ddot a_2f_2}%(\ddot a_1+a_2)f_2+\ddot a_2f_1
\,.
\end{eqnarray*}
Thus $a_2(t)=at+b$. To ensure $Y(0)=0$, we have $b=0$. If $Y(t)=0$ for $t>0$, we have $a_2=0$. The remaining equation then yields $\ddot a_1=0$.
A similar argument shows { $a_1=0$}. Consequently, there are no non-trivial Jacobi vector fields along a null geodesic with $Y(0)=0$ and $Y(t)=0$ for
$t>0$.
\smallbreak\noindent{\bf Step 1b.} Suppose $\sigma$ is not a null geodesic and $\rho(\dot\sigma,\dot\sigma)<0$. By rescaling the parameter,
we may assume $\rho(\dot\sigma,\dot\sigma)=-1$.
Let $f_i(t)$ be a parallel orthonormal frame along $\sigma$ with
$f_1(t)=\dot\sigma(t)$. We have $R(f_2,f_1)f_1=-f_2$ and the Jacobi equation becomes
$\ddot a_1=0$ and $\ddot a_2-a_2=0$.
Imposing the initial condition $Y(0)=0$ means $a_1(t)=at$ and $a_2(t)=b\sinh(t)$. This doesn't vanish for $t>0$.
\smallbreak\noindent{\bf Step 1c.} Suppose $\sigma$ is not a null geodesic and $\rho(\dot\sigma,\dot\sigma)>0$. By rescaling the parameter,
we may assume $\rho(\dot\sigma,\dot\sigma)=+1$. We have $R(f_2,f_1)f_1=f_2$ and the Jacobi equation becomes
$\ddot a_1=0$ and $\ddot a_2+a_2=0$.
We impose the initial condition $Y(0)=0$ to see $a_1=at$ and $a_2=b\sin t$. By Theorem~\ref{T4.1}~(4), the whole geodesic has length $\pi$.
Thus starting from $(1,0)$ in either direction, $0\leq t<\pi$ and there are no non-trivial Jacobi vector fields with $J(0)=0$ and $J(t)=0$ for $t>0$
in the parameter range. This shows that $d\exp_P(\xi)$ is non-singular and thus $\exp_P$ is a local diffeomorphism, thereby
completing the proof of Assertion~(1). We remark that in
Section~\ref{S6} we will consider the pseudosphere; the geodesics do in fact focus in the vertical directions (see Figure \ref{Fig6}).

\medbreak\noindent{\bf Step 2.}
To establish Assertion~(2), we must show that any two geodesics intersect in at most 1 point; 
this does not follow from our analysis of the Jacobi vector fields and we must give a separate argument.
We use the classification
of Theorem~\ref{T4.1}. By Equation~(\ref{E4.a}), we have
$\dot x^2=c(x^1)^2$. Thus if $c\ne0$, i.e. the geodesic is not parallel to the horizontal axis, $x^2$ is strictly increasing/decreasing and
thus intersects a geodesic parallel to the horizontal axis in at most one point. If a geodesic intersects a null geodesic in two points, the
slope of the geodesic at some point must equal $\pm1$ by the intermediate value theorem. Since the speed of the geodesic is constant,
the geodesic is in fact a null geodesic. Two distinct null geodesics either don't intersect, intersect in a single point, or coincide.
Suppose $P$ is a point of a geodesic $\sigma$. The complement of the two null geodesics thru $P$ divides $\mathbb{R}^+\times\mathbb{R}$ into
4 open regions. If $g(\dot\sigma,\dot\sigma)>0$, then $\sigma$ is ``vertical" and is contained in the upper and lower of the four regions;
if $g(\dot\sigma,\dot\sigma)<0$, then $\sigma$ is ``horizontal" and is contained in the left and right of the four regions (see Figure~\ref{Fig3}).
Consequently, ``vertical" and ``horizontal" geodesics intersect in at most one point.

Let $\lambda=\pm1$. We consider the family of hyperbolas given by Theorem~\ref{T4.1}~(5):
$$
\gamma_{c,\beta}:=\{(x^1,x^2):(x^1)^2{ -\textstyle\frac\lambda{c^2}}
=(x^2+\beta)^2\text{ for }x^1>0\}\,.
$$
Suppose $\gamma_{c,\beta}$ contains the point $(1,0)$ and some other point $(a,b)$. We then have
\begin{eqnarray*}
&&1{ -\frac\lambda{c^2}}=\beta^2\text{ and }a^2{ -\frac\lambda{c^2}}=(b+\beta)^2\text{ so }\\
&&a^2-1=(b+\beta)^2-\beta^2=b^2+2b\beta\,.
\end{eqnarray*}
If $b\ne0$, we can solve for $\beta$ and then solve for $c^2$ to determine $\gamma_{c,\beta}$ uniquely.
If $b=0$, we conclude $a^2=1$ so $a=1$ which contradicts the hypothesis $(a,b)\ne(1,0)$. Thus once $\lambda$ is fixed,
there is at most one geodesic in this family between the point $(1,0)$ and $(a,b)$ which completes the proof of Assertion~(2).

\medbreak\noindent{\bf Step 3.} We must show that any geodesic through the point $(1,0)$ does not intersect the rays
$x^2=\pm(x^1+1)$. This is immediate for the horizontal geodesic and for the half lines with slope $\pm\frac\pi4$ that pass through
the point $(1,0)$. Since the ``horizontal" geodesics are trapped to the right and left of the half lines with slope $\pm\frac\pi4$,
we need only consider the vertical geodesics of Theorem~\ref{T4.1}~(4), so we take $\lambda=1$. By Theorem~\ref{T4.1}~(5), these solve the equation
$(x^1)^2-d^2=(x^2+\beta)^2$ where we take $d=\frac1c$. To ensure this goes through the point $(1,0)$, we take $\beta=\pm\sqrt{1-d^2}$ for $0<d<1$.
We suppose this intersects the line $x^2=x^1+1$ as the case $x^2=-(x^1+1)$ is similar. This implies
\begin{eqnarray*}
&&(x^1)^2-d^2=(x^1+1\pm\sqrt{1- d^2})^2\text{ so}\\
&&(x^1)^2-d^2=(x^1)^2+2x^1(1\pm\sqrt{1-d^2})+1+(1-d^2)\pm2\sqrt{1-d^2}\text{ so}\\
&&0={ 2(x^1+1)(1\pm\sqrt{1-d^2})}%2(x^1+1\pm\sqrt{1-d^2})
\,.
\end{eqnarray*}
Since $1\pm\sqrt{1-d^2}>0$, this implies { $x^1=-1$} which is impossible. This completes the proof of Theorem~\ref{T5.1}.

\section{The pseudosphere $\mathbb{S}^2$}\label{S6}
We continue our investigation of this geometry using a different model. Give $\mathbb{R}^3$ the inner product
$\langle x,y\rangle=x_1y_1+x_2y_2-x_3y_3$. Let
$$
S:=\{x:\langle x,x\rangle=+1\}\text{ and }\mathbb{S}^2=(S,\langle\cdot,\cdot\rangle|_S)
$$
be the associated Lorentz manifold, the pseudosphere.
\vglue -.2cm\begin{figure}[H]
\caption{The pseudosphere $\mathbb{S}^2$}
\vglue -.3cm\includegraphics[height=4cm,keepaspectratio=true]{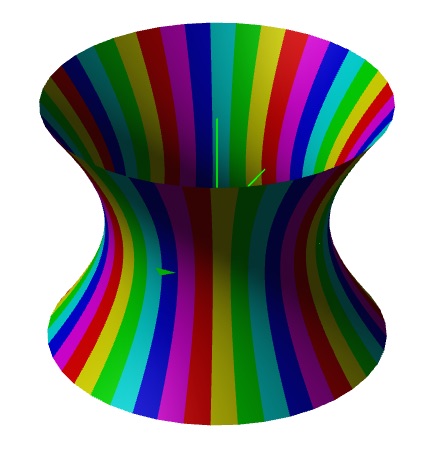}
\end{figure}

\begin{lemma}\label{L6.1} Adopt the notation established above
\begin{enumerate}
\item The Lorentz group $O(1,2)$ acts transitively on $\mathbb{S}^2$ by isometries.
\item Geodesics in $\mathbb{S}^2$ extend for infinite time.
\item The exponential map is not surjective from $T_PS$ to $\mathbb{S}^2$ for any $P$.
\end{enumerate}
\end{lemma}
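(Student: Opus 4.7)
The plan is to treat the three assertions in order, using the embedding $\mathbb{S}^2\subset\mathbb{R}^3$ throughout. For assertion~(1), I would observe that $O(1,2)$ preserves the ambient form $\langle\cdot,\cdot\rangle$ by definition, hence preserves the level set $S=\{x:\langle x,x\rangle=1\}$ and restricts to isometries of the induced pseudo-metric. For transitivity, I would fix the basepoint $P_0=(1,0,0)$; given $P=(p_1,p_2,p_3)\in S$, the relation $p_1^2+p_2^2=1+p_3^2\geq 1$ lets me first rotate in the $(x_1,x_2)$-plane (by an element of $O(2)\subset O(1,2)$) to send $P$ to $(r,0,p_3)$ with $r=\sqrt{p_1^2+p_2^2}\geq 1$, and then apply a hyperbolic boost in the $(x_1,x_3)$-plane with rapidity $\operatorname{arctanh}(p_3/r)$ to land at $P_0$.

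For assertion~(2), I would invoke the standard fact that on a non-degenerate affine quadric in $\mathbb{R}^3$ the geodesic acceleration is normal to the surface, so the geodesic equation on $\mathbb{S}^2$ reduces to $\ddot\gamma=-\langle\dot\gamma,\dot\gamma\rangle\,\gamma$; the scalar $c:=\langle\dot\gamma,\dot\gamma\rangle$ is conserved, since it is essentially the squared speed. Depending on the sign of $c$, the solution is trigonometric ($c>0$, spacelike initial velocity), linear $\gamma(t)=P+t\dot\gamma(0)$ ($c=0$, null initial velocity), or hyperbolic ($c<0$, timelike initial velocity) in $t$; in all three cases the solution is defined for every $t\in\mathbb{R}$.

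For assertion~(3), by the homogeneity established in~(1) I may fix $P=(1,0,0)$. I would exhibit the one-parameter family $Q_a:=(-1,a,a)$, $a\ne 0$, as points of $\mathbb{S}^2$ missed by $\exp_P$; these lie in $\mathbb{S}^2$ because $1+a^2-a^2=1$. If some geodesic $\gamma$ emanating from $P$ passed through $Q_a$, then $\gamma$ would lie in the $2$-plane $W=\operatorname{span}(P,\dot\gamma(0))$, forcing $Q_a+P=(0,a,a)$ to be a scalar multiple of $\dot\gamma(0)$. But $(0,a,a)$ is null, so $\dot\gamma(0)$ is null, and the explicit null solution from~(2) gives $\gamma(t)=P+t\dot\gamma(0)$, whose first coordinate is identically $1\ne -1$, a contradiction.

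The main obstacle will be justifying the clean form of the geodesic ODE used in~(2): one must verify that $\ddot\gamma$ is normal to $\mathbb{S}^2$ by differentiating the constraint $\langle\gamma,\gamma\rangle=1$ twice along $\gamma$ and using that $\dot\gamma$ is tangential, then invoke the pseudo-Riemannian Gauss formula. This is standard but requires care because the induced form is Lorentzian rather than Riemannian; once it is established, the remaining verifications in~(2) and~(3) are straightforward.
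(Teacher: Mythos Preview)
Your argument is correct and follows essentially the same route as the paper. For (1) you supply the explicit rotation--boost decomposition where the paper simply declares the assertion ``immediate''; for (2) both proofs reduce to $\ddot\gamma\perp S$ and split on the causal type of $\dot\gamma(0)$, the paper verifying each case by inspection while you derive the ODE $\ddot\gamma=-\langle\dot\gamma,\dot\gamma\rangle\gamma$ first; for (3) the paper exhibits the same unreachable points $-P+t\xi$ with $\xi$ null (your $Q_a=(-1,a,a)$ is exactly this with $\xi=e_2+e_3$) but merely asserts they are missed, whereas your plane argument---$\langle Q_a,P\rangle=-1$ forces $Q_a+P\in\operatorname{span}\dot\gamma(0)$, hence $\dot\gamma(0)$ is null, hence the first coordinate stays at $1$---actually carries out the verification. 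The obstacle you flag (the pseudo-Riemannian Gauss formula) is real but routine here since the normal direction $x$ is spacelike on $\langle x,x\rangle=1$, so the induced connection is the Levi--Civita connection of a genuine Lorentzian metric and the usual argument goes through.
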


\begin{proof} The first assertion is immediate from the definition. Since $\mathbb{S}^2$ is homogeneous, we may
assume that the point in question is $P=(1,0,0)$ in proving the remaining assertions. We note
$$
T_PS=\{\xi\in\mathbb{R}^3:\langle S,\xi\rangle=0\}=\operatorname{Span}\{e_2,e_3\}\,.
$$
Let $\xi=ae_2+be_3$. We distinguish 3 cases to establish Assertion~(2):
\begin{enumerate}
\item Assume $\xi$ is spacelike, i.e. that $a^2-b^2>0$.  We can rescale $\xi$ to ensure that $a^2-b^2=1$. Let
$\sigma(\theta)=\cos(\theta)e_1+\sin(\theta)\xi:\mathbb{R}\rightarrow\mathbb{S}^2$.
Since $\ddot\sigma=-\sigma$, $\ddot\sigma\perp T_{\sigma}S$ and thus $\ddot\sigma\perp S$. This implies $\sigma$ is
a geodesic which is defined for all time. Furthermore, $\sigma$ closes smoothly at $P$.
\item $a^2-b^2=0$. This vector is null. We can let $\sigma(t)=e_1+t\xi$. Since $\ddot\sigma=0$, this is a geodesic
which extends for all time.
\item $a^2-b^2<0$. This vector is timelike. We can rescale $\xi$ so $b^2-a^2=1$. Let $\sigma(t)=\cosh(t)e_1+\sinh(t)\xi$.
Again, $\ddot\sigma\perp S$ so this geodesic is defined for all time.
\end{enumerate}

We prove Assertion~(3) by remarking that the geodesics constructed in the proof of Assertion~(2) can never reach
$-P+t\xi$ for $\xi$ null and $\xi\perp P$ nor can they reach $-\cosh(t)P+\sinh(t)\xi$ for $\xi$ timelike and $\xi\perp P$.
\end{proof}

The pseudosphere $\mathbb{S}^2$ is not simply connected since $\mathbb{S}^2$ is diffeomorphic to $S^1\times\mathbb{R}$.
We construct the universal cover $\tilde{\mathbb{S}}$ as follows.
 Let
$$
T(u,v)=(\cosh(u)\cos(v),\cosh(u)\sin(v),\sinh(u))
$$
define a smooth map from $\mathbb{R}^2$ to $S$; this exhibits $\mathbb{R}^2$ as the universal cover of $S$. We compute:
$$\begin{array}{l}
\partial_uT=(\sinh(u)\cos(v),\sinh(u)\sin(v),\cosh(u)),\\[0.05in]
\partial_vT=\cosh(u)(-\sin(v),\cos(v),0),\\[0.05in]
g_{11}=-1,\quad g_{12}=0,\quad g_{22}=\cosh^2(u),\\[0.05in]
g=-du^2+\cosh^2(u)dv^2\,.
\end{array}$$
The non-zero Christoffel symbols are $\Gamma_{vvu}$ and $\Gamma_{uvv}$. We use the first Christoffel identity
$\Gamma_{ijk}=\frac12(g_{jk/i}+g_{ik/j}-g_{ij/k})$ and then raise indices to see:
$$\begin{array}{ll}
\Gamma_{122}=\cosh(u)\sinh(u),&\Gamma_{12}{}^2=\frac{\sinh(u)}{\cosh(u)},\\[0.05in]
\Gamma_{221}=-\cosh(u)\sinh(u),&\Gamma_{22}{}^1=\cosh(u)\sinh(u).
\end{array}$$
A brief computation then shows $\rho=\operatorname{diag}(-1,\cosh^2{(u)})$ and $\nabla\rho=0$. We have the following
picture of the geodesics where we have shaded the regions not reached by any geodesic from the origin:
\vglue -.2cm\begin{figure}[H]
\caption{Geodesics in the pseudosphere}\label{Fig6}
\vglue -.3cm\par\qquad$\tilde{\mathbb{S}}^2$\hglue 3.5cm $\mathbb{S}^2$\qquad\vphantom{.}\par
\includegraphics[height=3.2cm,keepaspectratio=true]{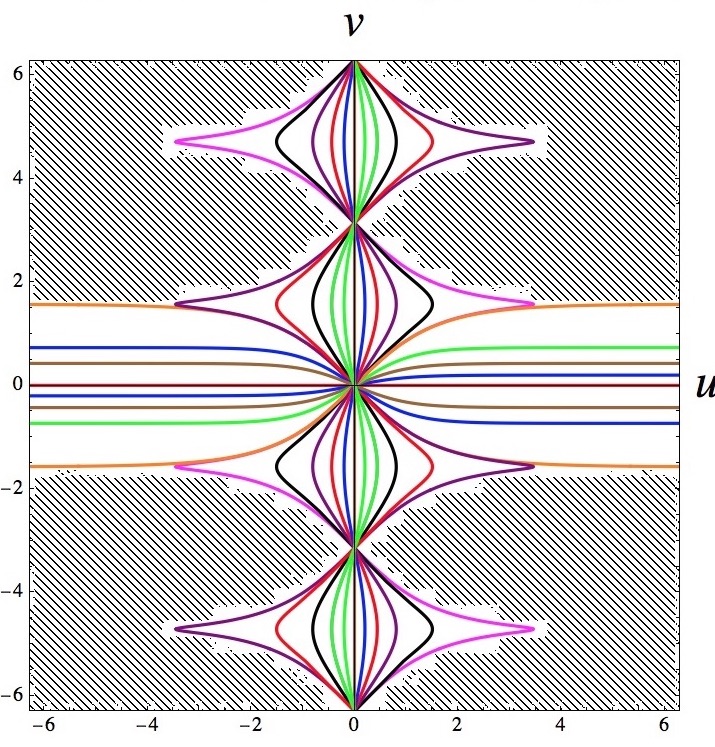}
\qquad\includegraphics[height=3.2cm,keepaspectratio=true]{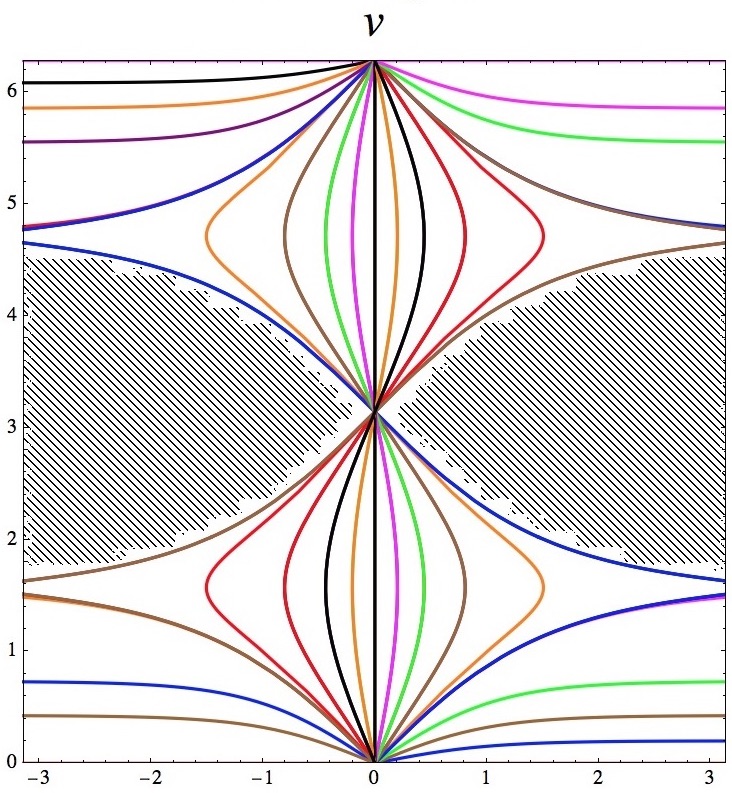}
\end{figure}
The picture on the left shows the geodesic structure thru the origin $(0,0)$. The geodesics with initial direction
$\frac\pi4$ from the horizontal are null geodesics; they are asymptotically horizontal.
Geodesics with initial direction less than $\frac\pi4$ are timelike; they are trapped above and below
by the null geodesic and again are asymptotically horizontal. Geodesics at angle less than $\frac\pi4$ from
the vertical focus on the vertical axis at $(0,n\pi)$ for $n=0,\pm1,\pm2,\dots$. The exponential map omits a large
area of the plane which is shaded.
In the picture at the right, we have made the geodesic periodic with vertical period $2\pi$
since $T(u,v)=T(u,v+2\pi)$. This gives the geodesic structure on the pseudosphere; one should
make a cylinder by identifying $(u,0)$ with $(u,2\pi)$ in the second picture.

\section{Relating $\mathbb{L}^2$ and the pseudosphere: Geodesic Sprays}\label{S8}
Let $\mathcal{M}=(M,g)$ be a Lorentzian surface. Let $\sigma(s)$ be a null geodesic. Let $\xi(s)$ be a parallel null vector field along $\sigma$
so that $g(\xi(s),\dot\sigma(s))=1$. We introduce the geodesic spray $T(s,t):=\exp_{\sigma(s)}\{t\xi(s)\}$; it may, of course, only be locally defined.

\begin{lemma}\label{L8.1} Adopt the notation established above.
\begin{enumerate}
\item We have $g(\partial_t,\partial_s)=1$ and $g(\partial_t,\partial_t)=0$.
\item $g(\partial_s,\partial_s)=t^2$ if and only if $g=\rho$.
\end{enumerate}
\end{lemma}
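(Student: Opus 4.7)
My plan is to reduce both assertions to the Jacobi equation along the $t$-curves $\gamma_s(t) := T(s,t)$, which are geodesics by construction. For Assertion~(1), $\partial_t$ is the velocity of $\gamma_s$ and hence parallel along it; since $\partial_t|_{t=0} = \xi(s)$ is null, parallel transport gives $g(\partial_t, \partial_t) \equiv 0$. Differentiating $g(\partial_t, \partial_s)$ along $\gamma_s$,
\[
\partial_t\,g(\partial_t, \partial_s) = g(\nabla_{\partial_t}\partial_t, \partial_s) + g(\partial_t, \nabla_{\partial_t}\partial_s) = g(\partial_t, \nabla_{\partial_s}\partial_t) = \tfrac12\,\partial_s\,g(\partial_t, \partial_t) = 0,
\]
using $\nabla_{\partial_t}\partial_t = 0$ and $\nabla_{\partial_t}\partial_s = \nabla_{\partial_s}\partial_t$ (torsion-freeness). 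Evaluating at $t=0$ yields $g(\xi, \dot\sigma) = 1$, which is Assertion~(1).

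For Assertion~(2), I would introduce the null frame $\{\partial_t, \eta\}$ along $\gamma_s$, where $\eta(s,t)$ is the parallel transport of $\dot\sigma(s)$ along $\gamma_s$. Preservation of $g$ under parallel transport forces $g(\eta, \partial_t) \equiv 1$ and $g(\eta, \eta) \equiv 0$. Writing $\partial_s = A\,\partial_t + B\,\eta$, Assertion~(1) gives $B = g(\partial_s, \partial_t) = 1$, so
\[
g(\partial_s, \partial_s) = 2A,
\]
and the claim reduces to showing $A(t) = t^2/2$ iff $g = \rho$.

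Since $T$ is a geodesic variation, $\partial_s$ is a Jacobi field along each $\gamma_s$, so $\nabla_{\partial_t}^2 \partial_s = R(\partial_t, \partial_s)\partial_t$. Because $\partial_t$ and $\eta$ are parallel along $\gamma_s$, the left side equals $\ddot A\,\partial_t$. The 2-dimensional curvature identity $R(X,Y)Z = K\{g(Y,Z)X - g(X,Z)Y\}$, with $K$ the Gauss curvature (so that $\rho = Kg$ in the paper's convention), yields $R(\partial_t, \eta)\partial_t = K\,\partial_t$; hence $\ddot A = K$ along $\gamma_s$. The initial data are $A(0) = 0$ (from $\partial_s|_{t=0} = \dot\sigma = \eta|_{t=0}$) and $\dot A(0) = 0$ (from $\nabla_{\partial_t}\partial_s|_{t=0} = \nabla_{\dot\sigma}\xi = 0$, as $\xi$ is parallel along $\sigma$). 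Integrating, $A = t^2/2$ on the spray iff $K \equiv 1$ on its image iff $\rho = g$ there; varying the basepoint and the null direction $\xi$ upgrades this to the global equivalence $g = \rho$. The delicate step is the curvature calculation in the null frame, where one must track $g(\partial_t, \eta) = 1$ carefully instead of using the orthonormal convention; once that is handled, everything reduces to a scalar Jacobi ODE with explicit initial data.
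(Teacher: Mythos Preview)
Your argument is correct and reaches the same conclusion as the paper, but by a genuinely different route for Assertion~(2). For Assertion~(1) your computation is essentially identical to the paper's. For Assertion~(2), the paper works directly in the $(s,t)$ coordinate system: setting $g(\partial_s,\partial_s)=2f$, it computes the Christoffel symbols, the curvature operator $R(\partial_s,\partial_t)$, and then all components of $\rho$ explicitly in terms of $f$ and its derivatives, obtaining $\rho(\partial_s,\partial_t)=f_{tt}$, $\rho(\partial_s,\partial_s)=2ff_{tt}$, $\rho(\partial_t,\partial_t)=0$; comparing with $g$ gives $\rho=g\Leftrightarrow f_{tt}=1$, and the initial conditions $f(s,0)=f_t(s,0)=0$ (the latter derived from $\sigma$ being a geodesic) force $f=t^2/2$. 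Your approach instead introduces the parallel null frame $\{\partial_t,\eta\}$, recognizes $\partial_s$ as a Jacobi field, and uses the two-dimensional identity $R(X,Y)Z=K\{g(Y,Z)X-g(X,Z)Y\}$ together with $\rho=Kg$ to reduce everything to the scalar ODE $\ddot A=K$ with $A(0)=\dot A(0)=0$; you obtain the second initial condition from $\xi$ being parallel along $\sigma$ rather than from $\sigma$ being a geodesic. Your method is more conceptual and avoids computing any Christoffel symbols, at the price of invoking the Jacobi machinery and the constant-curvature form of $R$; the paper's method is more self-contained and incidentally produces the full Ricci tensor in these coordinates. Your closing remark about varying the basepoint and the null direction is unnecessary: the lemma is local, and both the paper's condition $f_{tt}=1$ and your $K\equiv 1$ are asserted only on the (open) image of the spray.
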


\begin{proof} The curves $t\rightarrow T(s,t)$ are geodesics with initial direction $\xi(s)$. Thus $g(\partial_t,\partial_t)(s,t)$ is independent of $t$ so
$g(\partial_t,\partial_t)(s,t)=g(\xi(s),\xi(s))$. This vanishes as $\xi$ is a null vector field. We show that $g(\partial_s,\partial_t)(s,t)$ is independent of $t$ by computing:
$$
\partial_tg(\partial_s,\partial_t)=g(\nabla_{\partial_t}\partial_s,\partial_t)+g(\partial_s,\nabla_{\partial_t}\partial_t)=g(\nabla_{\partial_s}\partial_t,\partial_t)
=\textstyle\frac12{\partial_s}g(\partial_t,\partial_t)=0\,.
$$
Consequently $g(\partial_s,\partial_t)(s,t)=g(\partial_s,\partial_t)(s,0)=g(\dot\sigma(s),\xi(s))=1$. This establishes Assertion~(1).
We have $g(\dot\sigma(s),\dot\sigma(s))=0$ since $\sigma$ is a null geodesic. We compute:
\begin{eqnarray*}
0&=&g(\nabla_{\partial_s}\partial_s,\partial_t)|_{t=0}=\{\partial_sg(\partial_s,\partial_t)-g(\partial_s,\nabla_{\partial_s}\partial_t)\}|_{t=0}\\
&=&-\textstyle\frac12\partial_tg(\partial_s,\partial_s)|_{t=0}\,.
\end{eqnarray*}
Let $\nabla_{\partial_s}\partial_t=c\partial_s+d\partial_t$.
To simplify the notation, let $g(\partial_s,\partial_s)=2f(s,t)$. We have:
\begin{equation}\label{E8.a}
f(s,0)=0\text{ and }f_t(s,0)=0\,.
\end{equation}
Since $g(\partial_s,\partial_t)=1$ and $g(\partial_t,\partial_t)=0$, we have:
\begin{eqnarray*}
&&\Gamma_{121}=f_t=2fc+d,\quad\Gamma_{122}=0=c,\quad\nabla_{\partial_s}\partial_t=f_t\partial_t,\quad\nabla_{\partial_t}\partial_t=0,\\
&&R(\partial_s,\partial_t)\partial_t=-\nabla_{\partial_t}\nabla_{\partial_s}\partial_t=-\nabla_{\partial_t}\{f_t\partial_t\}=-f_{tt}\partial_t\,.
\end{eqnarray*}
Express $R(\partial_s,\partial_t)\partial_s=u\partial_s+v\partial_t$. Then
$$\begin{array}{lll}
u=R_{stst}=-R_{stts}=f_{tt},&2fu+v=R_{stss}=0,\\[0.05in]
R(\partial_s,\partial_t)\partial_t=-f_{tt}\partial_t,&R(\partial_s,\partial_t)\partial_s=f_{tt}\partial_s-2ff_{tt}\partial_t,\\[0.05in]
\rho(\partial_s,\partial_s)=2ff_{tt},&\rho(\partial_s,\partial_t)=f_{tt},&\rho(\partial_t,\partial_t)=0.
\end{array}$$
Consequently $\rho=g$ if and only if $f_{tt}=1$. We solve the equation $f_{tt}=1$ with initial conditions $f(0,t)=0$ and $f_t(0,t)=0$ provided by
Equation~(\ref{E8.a}) to see $f(s,t)=\frac12t^2$ so $g(\partial_s,\partial_s)=t^2$.
\end{proof}

Let $\mathbb{X}^2:=(\mathbb{R}^2,g_X)$ for $g_X(\partial_s,\partial_s)=t^2$, $g_X(\partial_s,\partial_t)=1$,
and $g_X(\partial_t,\partial_t)=0$. Let
\begin{eqnarray*}
&&T_{\mathbb{S}^2}(s,t):=(1-ts,s+\frac12t-\frac12ts^2,s-\frac12t-\frac12ts^2),\\
&&T_{\mathbb{L}^2}(s,t):=\textstyle(\frac{s^2}2t+s)^{-1}(-1,1)+(0,\frac2s)\text{ for }s>0,t>-\frac2s\,.
\end{eqnarray*}
\begin{theorem}
Adopt the notation established above.
\begin{enumerate}
\item $T_{\mathbb{S}^2}$ is an isometry from $\mathbb{X}^2$ to an open subset of $\mathbb{S}^2$.
\item $T_{\mathbb{L}^2}$ is an isometry from the subset $s>0$, $\frac12s^2t+s>0$ in $\mathbb{X}^2$ to $\mathbb{L}^2$.
\item $\mathbb{L}^2$ is isometric to an open subset of $\mathbb{S}^2$.
\end{enumerate}
\end{theorem}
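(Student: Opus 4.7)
The overall strategy is to reduce all three assertions to a single application of Lemma~\ref{L8.1}. Both target geometries have the feature that the Ricci tensor coincides with the metric: on $\mathbb{S}^2$ we computed in Section~\ref{S6} that $\rho=\operatorname{diag}(-1,\cosh^2(u))$, which is exactly $g=-du^2+\cosh^2(u)dv^2$; on $\mathbb{L}^2$ the symmetric Ricci tensor $(x^1)^{-2}\operatorname{diag}(-1,1)$ agrees with the metric from Theorem~\ref{T3.1}~(1). Therefore, by Lemma~\ref{L8.1}, any geodesic-spray parametrization $(s,t)\mapsto\exp_{\sigma(s)}(t\xi(s))$ built from a null geodesic $\sigma$ and a parallel null field $\xi$ along $\sigma$ with $g(\xi,\dot\sigma)=1$ automatically pulls the target metric back to $g_X$, making the parametrization an isometric immersion from its domain in $\mathbb{X}^2$.

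For Assertion~(1) the plan is to verify the four hypotheses of Lemma~\ref{L8.1} for $T_{\mathbb{S}^2}$ by direct computation in $\mathbb{R}^{1,2}$. First, set $\sigma(s):=T_{\mathbb{S}^2}(s,0)=(1,s,s)$; then $\langle\sigma,\sigma\rangle=1$, $\dot\sigma=(0,1,1)$ is null, and $\ddot\sigma=0$ is automatically normal to $S$, so $\sigma$ is a null geodesic. Next, set $\xi(s):=\partial_tT_{\mathbb{S}^2}|_{t=0}=(-s,\tfrac12-\tfrac12 s^2,-\tfrac12-\tfrac12 s^2)$; one checks $\langle\xi,\xi\rangle=0$, $\langle\xi,\dot\sigma\rangle=1$, and $\partial_s\xi=(-1,-s,-s)=-\sigma(s)$ which is normal to $S$, so $\xi$ is parallel and null along $\sigma$. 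Finally, because $T_{\mathbb{S}^2}(s,t)$ is linear in $t$ and, as a short algebraic check confirms, $\langle T_{\mathbb{S}^2}(s,t),T_{\mathbb{S}^2}(s,t)\rangle\equiv 1$, each $t$-curve is an affinely parametrized straight line lying on $S$, hence a geodesic. Lemma~\ref{L8.1} then gives $T_{\mathbb{S}^2}^*g_{\mathbb{S}^2}=g_X$; that $T_{\mathbb{S}^2}$ is an embedding onto an open subset follows from the non-vanishing of the Jacobian determinant, which the isometry condition itself forces.

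Assertion~(2) is handled in exactly the same manner, but using the Christoffel symbols of $\mathbb{L}^2$ listed at the beginning of Section~\ref{S4} rather than ambient orthogonality. Set $\sigma(s):=T_{\mathbb{L}^2}(s,0)$ and $\xi(s):=\partial_tT_{\mathbb{L}^2}|_{t=0}$; verify via the geodesic equations \eqref{E4.a} that $\sigma$ is one of the null geodesics classified in Step~2 of Theorem~\ref{T4.1}; check that $\xi$ is null, that $g(\xi,\dot\sigma)=1$, and that the covariant derivative $\nabla_{\dot\sigma}\xi$ vanishes; and verify that each $t$-curve satisfies the geodesic equation (the formula is explicitly of the form $(x^1)^{-1}(-1,1)+\text{constant}$, matching the null-geodesic template of Theorem~\ref{T4.1}~(2)). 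The condition $s>0$, $\tfrac12 s^2 t+s>0$ is precisely what keeps the first coordinate of the image in $\mathbb{R}^+$, and injectivity on this domain follows by solving explicitly for $(s,t)$ in terms of $(x^1,x^2)$. Lemma~\ref{L8.1} then yields the isometry.

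Assertion~(3) is now formal: restrict $T_{\mathbb{L}^2}^{-1}$ to $\mathbb{L}^2$ and compose with $T_{\mathbb{S}^2}$ to obtain an isometric embedding of $\mathbb{L}^2$ into $\mathbb{S}^2$; its image is open because local isometries between equidimensional pseudo-Riemannian manifolds are open maps. The most delicate step I anticipate is Assertion~(2): unlike the ambient computation for $\mathbb{S}^2$, verifying that $\xi$ is parallel along $\sigma$ in $\mathbb{L}^2$ requires careful bookkeeping with the Christoffel symbols $\Gamma_{11}{}^1$, $\Gamma_{12}{}^2$, $\Gamma_{22}{}^1$, and the apparent singularity of $T_{\mathbb{L}^2}$ at $s=0$ or $t=-2/s$ must be traced back to the corresponding breakdown of the parametrization, not of the isometry itself.
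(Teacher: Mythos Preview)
Your approach is essentially the paper's: in each case you identify $\sigma(s)=T(s,0)$ and $\xi(s)=\partial_tT|_{t=0}$, verify the hypotheses of Lemma~\ref{L8.1}, and read off $T^*g=g_X$. You are in fact more explicit than the paper about checking that $\xi$ is parallel (noting $\partial_s\xi=-\sigma$ is normal to $S$ in the $\mathbb{S}^2$ case, and planning to compute $\nabla_{\dot\sigma}\xi$ via the Christoffel symbols in the $\mathbb{L}^2$ case).

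There is one small but genuine gap in your treatment of Assertion~(1). Non-vanishing of the Jacobian---which the isometry condition indeed forces---only makes $T_{\mathbb{S}^2}$ an immersion (a local diffeomorphism), not an embedding; global injectivity does not follow automatically, since $\mathbb{X}^2$ is not compact and local isometries between Lorentzian surfaces can certainly fail to be one-to-one. The paper closes this gap by observing that $t=x^2(s,t)-x^3(s,t)$, which recovers $t$ from the image point, after which $s$ is determined. You correctly plan an explicit inversion for Assertion~(2), so the omission is only in the $\mathbb{S}^2$ case and is easy to repair.
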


\begin{proof} The proof of Lemma~\ref{L6.1} shows that null geodesics in $\mathbb{S}^2$ take the form $\sigma(t)=e_1+t\xi$ where
$\langle e_1,e_1\rangle=1$, $\langle e_1,\xi\rangle=0$, and $\langle\xi,\xi\rangle=0$. Let
$$
\sigma(s)=e_1+s(e_2+e_3)\text{ and }\textstyle\xi(s)=-se_1+\frac12(e_2-e_3)-\frac12s^2(e_2+e_3)\,.
$$
Then $\sigma$ is a null geodesic. We show that $\xi$ satisfies the hypotheses of Lemma~\ref{L8.1} by checking:
$\langle\sigma(s),\xi(s)\rangle=0$, $\langle\dot\sigma(s),\xi(s)\rangle=1$, $\langle\xi(s),\xi(s)\rangle=0$.
Thus setting $T(s,t)=\sigma(s)+t\xi(s)$ yields the defining relations for $T_{\mathbb{S}^2}$:
$$
\textstyle x^1(s,t)=1-ts,\quad x^2(s,t)=s+\frac12t-\frac12ts^2,\quad x^3(s,t)=s-\frac12t-\frac12ts^2\,.
$$
If $T(s,t)=T(\tilde s,\tilde t)$, then $t=x^2(s,t)-x^3(s,t)=x^2(\tilde s,\tilde t)-x^3(\tilde s,\tilde t)=\tilde t$.
It now follows that $s=\tilde s$. Thus the parametrization is 1-1 and the range is an open subset of $\mathbb{S}^2$. Assertion~(1) follows.

We now consider $\mathbb{L}^2=\mathbb{R}^+\times\mathbb{R}$ with the metric
\begin{equation}\label{E8.b}
ds^2=\frac{(dx^1)^2-(dx^2)^2}{(x^1)^2}\text{ i.e. }
g((a,b),(c,d))=(ac-bd)/(x^1)^2\,.
\end{equation}
For $s>0$, we set
$\sigma(s)=s^{-1}(1,1)$ and $\xi(s)=\textstyle\frac12(-1,1)$. By Theorem~\ref{T4.1}, $\sigma(s)$ is a null geodesic.
We have that $\xi(s)$ is a null vector field. Since $\dot\sigma(s)=-s^{-2}(1,1)$ and $x^1(s)=s^{-1}$, Equation~(\ref{E8.b})
implies $g_{\mathbb{L}^2}(\dot\sigma(s),\xi(s))=1$ as desired. Let
$$
\textstyle T(s,t)=(a(s)t+s)^{-1}(1,-1)+(0,\frac2s)\text{ for }t>-s\,.
$$
We then have $T(s,0)=(\frac1s,\frac1s)$ as desired. To ensure $T_*\partial_t|_{t=0}=-a(s)s^{-2}(1,-1)$ we set $a(s)s^{-2}=\frac12$ or $a(s)=\frac{s^2}2$.
Thus
$$
\textstyle T(s,t)=(\frac{s^2}2t+s)^{-1}(-1,1)+(0,\frac2s)\,.
$$
It is immediate by inspection that the map is 1-1. This proves Assertion~(2); Assertion~(3) is now immediate.
\end{proof}

\begin{remark}\rm If one took $\sigma(s)$ to be a unit spacelike geodesic (vertical spine) and took the spray of timelike unit geodesics perpendicular to it,
one would get $ds^2=\cosh^2(t)ds^2-dt^2$. If one took $\sigma(s)$ to be a unit timelike geodesic (horizontal spine) and took the spray of unit spacelike geodesics
perpendicular to it, one would get $ds^2=-\cos^2(t)ds^2+dt^2$. The resulting picture in $\mathbb{L}^2$ is pictured below. Those with a vertical spine fail to give
a 1-1 parametrization; those with a horizontal spine fail to fill up $\mathbb{L}^2$. The spray from a null geodesic suffers from neither of these defects.
\goodbreak\begin{figure}[H]
\caption{Geodesic sprays in $\mathbb{L}^2$: Vertical and Horizontal Spines}
$\mathbb{L}^2$ spray -- vertical spine\qquad$\mathbb{L}^2$ spray -- horizontal spine
\par\centerline{\includegraphics[height=3.5cm,keepaspectratio=true]{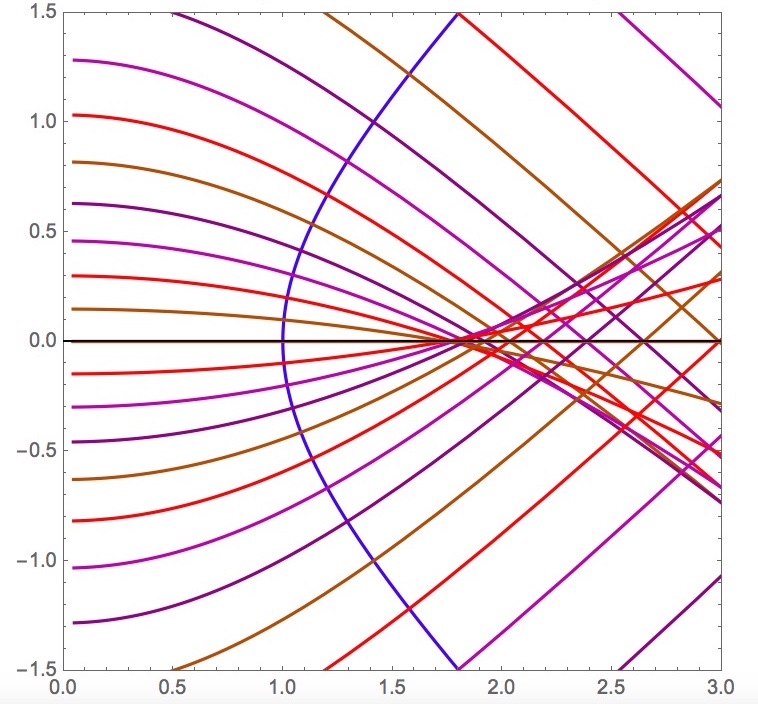}\qquad\qquad
\includegraphics[height=3.5cm,keepaspectratio=true]{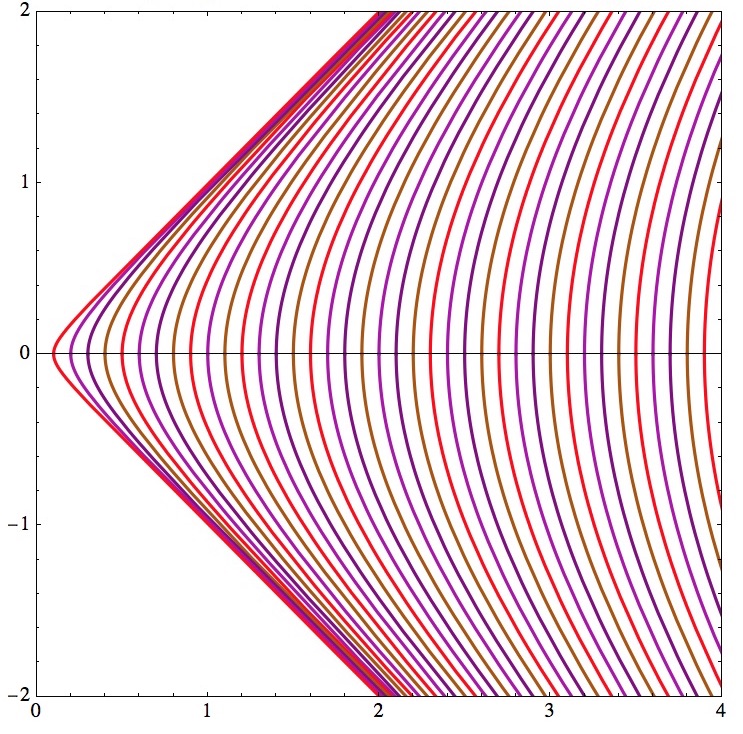}}
\end{figure}
\end{remark}
\subsection*{Acknowledgments}Research partially supported by project MTM2016-75897-P (Spain).

\end{document}